\documentclass[letterpaper, 10 pt, conference]{ieeeconf} 
\IEEEoverridecommandlockouts                                                                                   
\usepackage{graphics} 
\usepackage{epsfig} 
\usepackage{amsmath}
\usepackage{amsfonts} 
\usepackage{xcolor}
\usepackage{balance}
\usepackage{dblfloatfix}
\usepackage{soul}
\usepackage{latexsym,theorem}
\usepackage{cite}
{\theorembodyfont{\slshape}\newtheorem{theorem}{Theorem}[section]}
{\theorembodyfont{\slshape}}
{\theorembodyfont{\slshape}\newtheorem{lemma}[theorem]{Lemma}}
{\theorembodyfont{\slshape}}
{\theorembodyfont{\slshape}}
{\theorembodyfont{\upshape}}
{\theorembodyfont{\upshape}\newtheorem{problem}[theorem]{Problem}}
{\theorembodyfont{\upshape}\newtheorem{remark}[theorem]{Remark}}
{\theorembodyfont{\upshape}\newtheorem{assumption}[theorem]{Assumption}}
{\theorembodyfont{\upshape}}
\title{\LARGE \bf
Dynamic Output-Feedback Controller Synthesis for Dissipativity and $H_2$ Performance from Noisy Input-Output Data
}
\author{Pietro Kristović$^{1}$, Andrej Jokić$^{1}$ and Mircea Lazar$^{2}$
\thanks{$^{1}$ Faculty of Mechanical Engineering and Naval Architecture, University of Zagreb, Zagreb, Croatia, {\tt\small pietro.kristovic@fsb.unizg.hr, andrej.jokic@fsb.unizg.hr}}
\thanks{$^{2}$ Electrical Engineering Faculty, Eindhoven University of Technology, Eindhoven, The Netherlands
        {\tt\small m.lazar@tue.nl}}
\thanks{This research has been supported by the European Regional Development Fund under grant agreement PK.1.1.10.0007 (DATACROSS).}
}
\begin{document}
\maketitle
\thispagestyle{empty}
\pagestyle{empty}
\begin{abstract}
In this paper we propose dynamic output-feedback controller synthesis methods  for discrete-time linear time-invariant systems. 
The synthesis goal is either to achieve dissipativity with respect to a given quadratic supply rate, or to achieve given $H_2$ performance level.  
It is assumed that the autoregressive model of system dynamics is unknown,
expect for the noisy disturbance term which is not part of the performance channel.
Instead, we have a recorded trajectory of inputs and outputs 
  which can be corrupted by an unknown but bounded disturbance.
Methods are formulated  in terms of  linear matrix inequalities parametrized by a scalar variable, while in noiseless case they reduce to linear matrix inequalities.
 Within the considered setting, synthesis procedures are non-conservative.

\end{abstract}
\section{INTRODUCTION}
\label{IntroductionLabel}

In this paper, we propose dynamic output-feedback controller synthesis methods for discrete-time linear time-invariant (LTI) systems. More precisely, we consider an LTI system which admits a representation in the following autoregressive (AR) form
\begin{align} 
&y(t)+A_1 y(t-1) + \ldots + A_l y(t-l) = \nonumber \\
&B_{u0} u(t)+B_{u1} u(t-1)+\ldots + B_{ul} u(t-l)+ \nonumber \\
&B_{w0} w(t)+B_{w1} w(t-1)+\ldots + B_{wl} w(t-l)+ \nonumber \\
& + B_{d0} d(t) \label{IntroSys} 
\end{align}
where $y$ is the measured output for control purposes, $u$ is the control input, $w$ is the disturbance input and $d$ is the noise. Additionally, we add the output $z$ to the system, which is defined as a linear combination of
\[ \{ y(t-1),\ldots,y(t-l),u(t),\ldots,u(t-l),w(t), \ldots,w(t-l) \}.\] 
The channel $w \,\, \rightarrow \,\, z$ is the performance channel and we are concerned with the following controller synthesis problem related to that input-output pair: (\emph{a}) render the closed-loop system dissipative with respect to a given generic unstructured quadratic supply rate, (\emph{b}) achieve a given $H_2$ performance level. Indeed, stability of the closed-loop system is implicitly included.

The results presented in this paper belong to the direct data-driven control framework. In its approach to the controller synthesis, this paper adds to the line of research in the \textit{informativity} framework, which has gained significant attention during the past years, see,
e.g., \cite{R27} for an overview.
We assume no knowledge of the system matrices in \eqref{IntroSys}, except for the matrix $B_{d0}$. We also assume that we know how the performance output signal $z$ is formed from $y$, $u$ and $w$. To compensate for the unknown part of the model, we assume to have recorded system's trajectory\footnote{Detailed description of the problem setting is presented in the second section of the paper.}, which is possibly corrupted by an unknown but bounded noise $d$ (see \eqref{IntroSys}).   

Data-driven methods for generic LTI systems with available state measurements, which rely on data from input-state trajectories, have been presented in \cite{R26, c4, c6, R5564636, R24, zz1, zz66, c5}. All these works are concerned with static state-feedback controller synthesis, while the synthesis objectives include stabilization, $H_2$, $H_\infty$ and generic quadratic performance.
In contrast, the input-state data was used to derive \emph{dynamic output-feedback} controller synthesis methods for dissipativity, $H_2$ and $H_\infty$ performance in \cite{66999} and \cite{999}.
In summary, all these methods use input-state data. 

It has been recognized that AR models can be suitably rewritten into input-state form and the static state-feedback synthesis methods can be adopted to synthesize output-feedback controllers for such systems. This has been done in   \cite{R26}, \cite{c5}, \cite{a1}, \cite{zz2}, where again stabilization, $H_2$, $H_\infty$ and quadratic performance have been considered, while a behavioural framework is used in \cite{R23} for stabilization.
 However, one specific feature of all these results is that the disturbance  affects the system in a specific way, with no associated dynamics. 
With reference to \eqref{IntroSys}, 
there is no input $d$ and the input $w$ affects the system only through known matrix $B_{w0}$ while $B_{w1}, \ldots, B_{wl}$ are not present. That is what we mean by the term that no dynamics is associated with the disturbance. Therefore, the considered systems are of a specific AR class.

It is furthermore important to emphasize that all of the above mentioned results are applicable only under the specific \textit{restriction} that  $n=pl$, where $p$ is the number of outputs available for control (in reference to \eqref{IntroSys} there is $y(t)\in \mathbb{R}^p$), $l$ is the system's lag (see \eqref{IntroSys}) while $n$ is the order of a minimal state space realization of the system.
This restriction can be resolved by constructing special non-minimal realization using the data as shown in \cite{R33}. Based on this, and in case the signal-to-noise ratio is sufficiently large, robust
controller synthesis methods for stabilization from \cite{c6} and
\cite{R5564636} can be used. However, in \cite{R33} it remains unclear how to define and to interpret disturbance term in the model of system dynamics.
The restriction $n=pl$ is also resolved in \cite{R22}, where the system stabilization based on data corrupted by a measurement error is considered.
This is achieved by  constructing an auxiliary
system which extends system realization. However, it is not clear how to construct this auxiliary system, as stated in  \cite[Sec. VI]{R33}. 
Furthermore, in both \cite{R33} and \cite{R22} the structure of the constructed system realization is not exploited for the purpose of controller synthesis what implies that the obtained solutions are in general conservative. These issues regarding controller synthesis, but again for specific AR models with no dynamics associated with the disturbace, have been resolved in \cite{R42667} where dissipativity and $H_2$ performance are considered.

The main contributions of this paper are summarized as follows. We consider a generic LTI system that can be represented in the form of AR model \eqref{IntroSys} and provide a data-driven dynamic output-feedback controller synthesis method for dissipativity and $H_2$ performance of the closed-loop system.
The natural state-space realization of a generic AR model
is non-minimal where $n \leq pl$ in general, thus, the state data matrix may not have a full row rank, which 
prevents direct application of existing state-feedback methods.
 The key idea here is to project the non-minimal realization onto the reachable subspace identified by the data (via compact SVD of the state data matrix), obtaining a controllable reduced realization whose dimension equals the rank of the data. The AR shift structure is then exploited  by separating the unknown system matrices from known structural parts, enabling a non-conservative application of the matrix S-lemma over the data-consistency set.
The obtained synthesis conditions are in a form of linear matrix inequalities (LMIs) parametrized by a scalar variable, thus, the controller parameters can be obtained by solving the LMIs multiple times in a line-search procedure.
In the noiseless case, the synthesis conditions reduce to LMIs.
Obtained synthesis methods are non-conservative in the sense that they are necessary and sufficient conditions
for a set of systems that are consistent with the recorded data and the known disturbance bound.
To illustrate the results, we use an academic numerical example from \cite{R22} and an example of active car suspension, taken from \cite{MatlabRef}.

\subsubsection*{Notation} 
We use $\mathbb{R}$, $\mathbb{N}_{\geq 0}$, $\mathbb{R}^n$ and  $\mathbb{R}^{m\times n }$ to denote a field of real numbers, a set of natural numbers, an $n$-dimensional vectors with elements in $\mathbb{R}$, and $m$ by $n$ matrices with elements in $\mathbb{R}$, respectively.
 We use $M^\dagger $ to denote the Moore-Penrose pseudo-inverse
of a real matrix $M$, while $\text{im}(M)$, $\text{ker}(M)$, $\text{tr}(M)$ and $\text{rank}(M)$ denote the image, kernel space, trace and rank of matrix $M$, respectively.
With matrices
$A_1,...,A_n$ which have the same number of columns, we
use $\text{col}(A_1,..., A_n)$ to denote the matrix $\begin{pmatrix} A_1^\top \cdots A_n^\top \end{pmatrix}^\top$. 
For a real square matrix $M$ we use $\text{He}\{M\}$ to denote a symmetric matrix $M^\top + M$. We use $\text{diag}(A,B)$ to denote the matrix \begin{small}$\begin{pmatrix} A & 0 \\ 0 & B  \end{pmatrix}$\end{small}. 
In a linear matrix inequality (LMI), $\star$ represents blocks which can be inferred from symmetry.  
When replacing a set of inequalities $C \prec 0$, $A-BC^{-1}B^\top \prec 0$ with a matrix inequality of the form \begin{small}$\begin{pmatrix} A & B \\ B^\top & C \end{pmatrix}\prec 0$\end{small} (and vice versa), we will say that we have \emph{used the Schur complement with respect to the matrix $C$}. In such a way we explicitly specify which block diagonal matrix is inverted. When we say that \textit{we apply a congruence transformation on a matrix inequality $A\succ 0$ with respect to the matrix $S$}, we mean that $S$ is a full rank matrix, and that the transformed inequality has the form $S^\top A S \succ 0$. 
 With $I_m$ and $0_{p_1 \times p_2}$ we denote the identity matrix of the size $m$, and the zero matrix of size $p_1 \times p_2$, respectively.
 We use  $\mathcal{C}(A,B)$ to denote the corresponding controllability matrix, where $A$ and $B$ are the state and input matrix, respectively.
\section{Preliminaries}
\subsection{Dissipativity}
\label{SecDissipativity}
Consider a discrete time LTI system 
\begin{equation}
\label{Sys1}
\begin{pmatrix} x(t+1) \\ z(t) \end{pmatrix} =
\begin{pmatrix} A & B \\ C & D \end{pmatrix}
\begin{pmatrix} x(t) \\  w(t) \end{pmatrix} 
\end{equation}
where $t \in \mathbb{N}_{\geq 0}$, $x(t) \in \mathbb{R}^n$, $w(t) \in \mathbb{R}^{m_w}$ and $z(t) \in \mathbb{R}^{p_z}$ represent time, the system state, input and output, respectively. We say that \eqref{Sys1} is strictly dissipative with respect to a quadratic supply function 
\begin{equation}
\label{supplyFunction}
s(w(t),z(t)):=
\begin{pmatrix}
w(t) \\ z(t)
\end{pmatrix}^\top
\begin{pmatrix}
-Q & -S \\
-S^\top & -R
\end{pmatrix}
\begin{pmatrix}
w(t) \\ z(t)
\end{pmatrix}
\end{equation}
 if there exist a symmetric matrix $P$ such that
\begin{equation}
\label{Dissip2}
\begin{pmatrix}
I & 0 \\
A & B \\
0 & I\\
C & D  \\
\end{pmatrix}^\top
\begin{pmatrix}
-P & 0 & 0 & 0  \\
0& P & 0 & 0   \\
0 & 0 & Q & S \\
0 & 0 & S^\top & R\\
\end{pmatrix}
\begin{pmatrix}
I & 0 \\
A & B \\
0 & I\\
C & D  \\
\end{pmatrix}
\prec 0, 
\end{equation}
in which case $V(x(t))=x(t)^\top P x(t)$ acts as a storage function.
If in addition we have $P\succ 0$ and $R\succeq 0$, then \eqref{Dissip2} implies stability of system \eqref{Sys1} since the Lyapunov inequality $A^\top P A-P\prec 0$ is incorporated in \eqref{Dissip2}.
\begin{remark}
\label{RemarkPrvi1}
The channel $w  \rightarrow z $ achieves $H_\infty$ performance of at least $\gamma $ (with $\gamma >0$) if and only if \eqref{Dissip2} holds for
$Q=-\gamma^2 I_{m_w}$, $R = I_{p_z}$,  $S = 0$ and some $P \succ 0$.
\end{remark}
\subsection{$H_2$ performance}
\label{H2performanceSubsection}
Consider a discrete time LTI system \eqref{Sys1}
and a $H_2$ norm on the performance channel $w \rightarrow z$ denoted by $||T(e^{j\omega})||_2$, where
\begin{equation}
\label{PrijenosnaFunkcija}
T(e^{j\omega} )=C (e^{j\omega}I-A)^{-1}B+D.
\end{equation}
According to \cite{zz00}, $||T(e^{j\omega})||_{2} < \mu $ if and only if  $\text{tr}(Z)< \mu^2$ and
\begin{equation}
\label{H2LMI}
\begin{pmatrix}
P-A P A^\top & B \\
B^\top &  I
\end{pmatrix}
 \succ 0, 
 \quad 
 \begin{pmatrix}
Z-DD^\top  & CP \\
P  C^\top &  P
\end{pmatrix} \succ 0,
\end{equation}
in which case the channel $w \rightarrow z $ achives a $H_2$ performance of at least $\mu$. Furthermore,
\eqref{H2LMI} implies stability of system \eqref{Sys1} since Lyapunov inequalities are incorporated in \eqref{H2LMI}.
\subsection{Matrix S-lemma}
The following version of the matrix S-lemma has been presented in \cite[Thm. 4.10]{c21}. 
\begin{theorem}
\label{Theorem0}
Let $M, H \in \mathbb{R}^{(n+r)\times (n+r)}$ be symmetric matrices with partitions \begin{small}$H:=\begin{pmatrix} H_{11} & H_{12} \\ H_{12}^\top & H_{22} \end{pmatrix}$\end{small} where $H_{11}\in \mathbb{R}^{n \times n}$,
and let the set $S_{H}$ be defined as
\begin{equation}
S_{H}:=\{ Z\in\mathbb{R}^{r\times n} | \begin{pmatrix} I \\ Z \end{pmatrix}^\top H \begin{pmatrix} I \\ Z \end{pmatrix} \succeq 0 \}.
\end{equation}
Assume that $H_{22}\prec 0$ and $S_{H} \neq \emptyset$.
Then, we have that
\begin{equation}
\begin{pmatrix}
I \\
Z
\end{pmatrix}^\top
M
\begin{pmatrix}
I \\
Z
\end{pmatrix}\succ 0 \text{ for all } Z\in S_{H}
\end{equation}
if and only if there exists scalar $\alpha \geq 0$ such that
\begin{equation}
M-\alpha
H \succ
0. 
\end{equation}
\end{theorem}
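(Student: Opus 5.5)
The "if" direction is a direct substitution, and the "only if" direction is a convex separation argument in the space of symmetric matrices.

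\emph{Sufficiency.} Suppose $M-\alpha H\succ 0$ for some $\alpha\ge 0$, and take any $Z\in S_H$. Congruence with $\mathrm{col}(I,Z)$, which has full column rank, gives
\[
\begin{pmatrix} I\\ Z\end{pmatrix}^\top M\begin{pmatrix} I\\ Z\end{pmatrix}\succ \alpha\begin{pmatrix} I\\ Z\end{pmatrix}^\top H\begin{pmatrix} I\\ Z\end{pmatrix}\succeq 0 .
\]
Neither $H_{22}\prec 0$ nor $S_H\neq\emptyset$ is needed here.

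\emph{Necessity: reformulation.} I will reduce to a lossless S-procedure. The first step is to lift the condition on $S_H$ to a condition on pairs of matrices. Consider
\[
\mathcal{K}:=\Big\{\begin{pmatrix} X\\ Y\end{pmatrix}\in\mathbb{R}^{(n+r)\times n} \;\Big|\; \begin{pmatrix} X\\ Y\end{pmatrix}^\top H\begin{pmatrix} X\\ Y\end{pmatrix}\succeq 0\Big\}.
\]
I claim every such pair with $\mathrm{col}(X,Y)\neq 0$ has $X\neq 0$. Indeed, if $X=0$ then $Y^\top H_{22}Y\succeq 0$, and $H_{22}\prec 0$ forces $Y=0$. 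More generally, $\ker X\subseteq\ker Y$: if $Xv=0$, then $v^\top Y^\top H_{22}Yv\ge 0$, so $Yv=0$. When $X$ is invertible, $Z=YX^{-1}\in S_H$, and the hypothesis gives $\mathrm{col}(X,Y)^\top M\,\mathrm{col}(X,Y)\succ 0$. For singular $X$ I will argue by density: perturb $X$ to an invertible matrix and use a fixed $\bar Z\in S_H$ (available since $S_H\neq\emptyset$) to stay inside the constraint set. This yields only the non-strict inequality $\succeq 0$ on the closure. The upshot is the implication
\[
\mathrm{col}(X,Y)^\top H\,\mathrm{col}(X,Y)\succeq 0 \;\Longrightarrow\; \mathrm{col}(X,Y)^\top M\,\mathrm{col}(X,Y)\succeq 0,
\]
with strictness whenever $X$ is invertible.

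\emph{Necessity: S-lemma.} Next I pass to vectors. Testing with a vector $\xi$ and rank-one lifts, the implication gives $\xi^\top H\xi\ge 0\Rightarrow\xi^\top M\xi\ge 0$. Slater's condition comes from $S_H\neq\emptyset$ together with $H_{22}\prec 0$. Because $H_{22}\prec 0$, $H$ has at least $r$ negative eigenvalues. Since $S_H\neq\emptyset$ and $H_{22}\prec 0$, the Schur complement $H_{11}-H_{12}H_{22}^{-1}H_{12}^\top$ is positive semidefinite. In the interior case this supplies a vector $\xi$ with $\xi^\top H\xi>0$. The classical Yakubovich S-lemma then gives $\alpha\ge 0$ with $M-\alpha H\succeq 0$.

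To upgrade to strict inequality I will use a compactness/separation argument. Define the convex cones $\mathcal{A}:=\{\mathrm{col}(I,Z)^\top(M-\alpha H)\mathrm{col}(I,Z)\}$ and separate them via the trace inner product. Alternatively, I will replace $M$ by $M-\epsilon\,\mathrm{diag}(I,0)$. Strict positivity on $S_H$ survives this perturbation for small $\epsilon$, provided $S_H$ is compact, and compactness holds because $H_{22}\prec 0$ bounds $Z$. I then re-apply the non-strict result and absorb the $\epsilon$ term using $H_{22}\prec 0$.

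\emph{Main obstacle.} The hardest step is strictness together with the degenerate case where the Schur complement is singular, so there is no Slater point. There I would first try a direct dual SDP argument. The primal problem is to find $\alpha\ge 0$ with $M-\alpha H\succ 0$. By the theorem of alternatives, infeasibility gives $W\succeq 0$, $W\neq 0$, with $\mathrm{tr}(MW)\le 0$ and $\mathrm{tr}(HW)\ge 0$. I would then factor $W=\mathrm{col}(X,Y)\mathrm{col}(X,Y)^\top$ and use $H_{22}\prec 0$ to ensure $X\neq 0$. Finally I would construct $Z\in S_H$ violating the hypothesis; this needs a rank-reduction argument to get a $Z$ with $\mathrm{col}(I,Z)^\top M\,\mathrm{col}(I,Z)\not\succ 0$.
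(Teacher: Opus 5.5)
The paper gives no proof of this statement; it imports it from \cite{c21}. So your argument has to stand on its own, and as written it does not. Your sufficiency direction is fine. So is your strictness upgrade in the non-degenerate case: compactness of $S_H$, replacing $M$ by $M-\epsilon\,\mathrm{diag}(I,0)$, then absorbing via $\epsilon I-\delta S\succ 0$ with $S:=H_{11}-H_{12}H_{22}^{-1}H_{12}^\top$.

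The first genuine gap is the lifting step for singular $X$, and your rank-one lifts to vectors depend on it whenever $n>1$. Knowing $\ker X\subseteq\ker Y$ gives $Y=ZX$ for \emph{some} $Z$, but you need one with $Z\in S_H$, and the density argument does not supply it. Perturbing to $X+\tfrac1k X'$, $Y+\tfrac1k\bar Z X'$ creates cross terms between $\ker X$ (where the unperturbed form vanishes) and its complement. These are of the same relative order as the new $\ker X$ block, so positive semidefiniteness is not preserved in general. Restoring it requires already knowing $\mathrm{col}(X,Y)=\mathrm{col}(I,Z)X$ with $Z\in S_H$, which is circular.

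The missing idea is an explicit extension lemma. Completing the square with $Z_0:=-H_{22}^{-1}H_{12}^\top$ gives $\mathrm{col}(I,Z)^\top H\,\mathrm{col}(I,Z)=S-(Z-Z_0)^\top(-H_{22})(Z-Z_0)$, hence $S\succeq 0$. For $\xi=\mathrm{col}(\xi_1,\xi_2)$ with $\xi^\top H\xi\ge 0$, the vector $\delta:=(-H_{22})^{1/2}(\xi_2-Z_0\xi_1)$ satisfies $\|\delta\|\le\|S^{1/2}\xi_1\|$. Then $K:=\delta\,(S^{1/2}\xi_1)^\top/\|S^{1/2}\xi_1\|^2$ (or $K=0$ if $S^{1/2}\xi_1=0$) is a contraction, and $Z:=Z_0+(-H_{22})^{-1/2}KS^{1/2}$ lies in $S_H$ with $Z\xi_1=\xi_2$. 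Since $\xi\neq 0$ forces $\xi_1\neq 0$, this yields directly $\xi^\top H\xi\ge 0,\ \xi\neq 0\Rightarrow \xi^\top M\xi>0$, with no density argument.

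The second gap is the case you leave open, and you misidentify it. A singular but nonzero $S$ still gives a Slater vector $\mathrm{col}(v,Z_0v)$ with $v^\top Sv>0$. The only degenerate case is $S=0$, where $S_H=\{Z_0\}$ and $H=\begin{pmatrix}-Z_0 & I\end{pmatrix}^\top H_{22}\begin{pmatrix}-Z_0 & I\end{pmatrix}\preceq 0$. Finsler's lemma closes this case at once, since $M$ is positive definite on $\ker\begin{pmatrix}-Z_0 & I\end{pmatrix}=\mathrm{im}\,\mathrm{col}(I,Z_0)$.

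Your dual route can also be made to work uniformly, with no Slater point and no $\epsilon$-perturbation, but not by factoring $W=\mathrm{col}(X,Y)\mathrm{col}(X,Y)^\top$: $\mathrm{tr}(HW)\ge 0$ does not imply $\mathrm{col}(X,Y)^\top H\,\mathrm{col}(X,Y)\succeq 0$. The rank reduction you need is the Sturm--Zhang decomposition $W=\sum_i w_iw_i^\top$ with $w_i^\top Hw_i=\mathrm{tr}(HW)/\mathrm{rank}(W)\ge 0$ for every $i$. Since $\sum_i w_i^\top Mw_i=\mathrm{tr}(MW)\le 0$, some $w_i\neq 0$ has $w_i^\top Mw_i\le 0$. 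The extension lemma above turns it into a $Z\in S_H$ with $\mathrm{col}(I,Z)^\top M\,\mathrm{col}(I,Z)\not\succ 0$, a contradiction.

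Finally, drop the separation sketch with the set $\mathcal{A}$: only one set is specified, so it is not a well-posed separation argument.
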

Note that in \cite[Thm. 4.10]{c21} it is in fact required that $H_{11}-H_{12}H_{22}^\dagger H_{12}^\top \succeq 0$ (i.e., the generalized Schur complement of $H$ with respect to $H_{22}$ is positive semidefinite). This condition is equivalent to the condition that $S_{H}$ is not an empty set, as shown in \cite[page 6]{c21}.

\section{Problem definition}
\label{Problem definition label}

\subsection{The plant and its state-space realization}
Consider the discrete-time LTI system 
\begin{equation}
\label{ARM}
\mathcal{A}(q^{-1})y(t)=\mathcal{B}_u(q^{-1})u(t)+\mathcal{B}_w(q^{-1}) w(t)+B_{d0} d(t),
\end{equation}
where $y (t) \in \mathbb{R}^p$ is the output which is measurable
for control purpose, $u(t) \in \mathbb{R}^m$ is the control input, $w(t) \in \mathbb{R}^{m_w}$ is the disturbance input associated with the performance channel,  $d(t) \in \mathbb{R}^{m_d}$ is a disturbance input (noise) which is not considered as a part of the performance channel, and $q^{-1}$ represents the delay operator, i.e., $q^{-1}y(t)=y(t-1)$.

Alternatively, the system  \eqref{ARM} can be presented in a standard state-space form. 
For future reference, let $n$ denote the order of a minimal state-space realization of the system \eqref{ARM}.
We assume that the matrix $B_{d0}\in \mathbb{R}^{p\times m_d}$ is a full column rank matrix and the matrices $\mathcal{A}(\xi)\in\mathbb{R}^{p \times p} [\xi]$, $\mathcal{B}_u(\xi)\in\mathbb{R}^{p \times m} [\xi]$ and $\mathcal{B}_w(\xi)\in\mathbb{R}^{p \times {m_w}} [\xi]$ are
matrices of polynomials in the indeterminate $\xi$, that is
\begin{equation}
\begin{split}
\mathcal{A}(\xi)&:=I+A_1\xi+A_2\xi^2+\cdots+A_l\xi^l, \\
\mathcal{B}_u(\xi)&:=B_{u0}+B_{u1}\xi+B_{u2}\xi^2+\cdots+B_{ul}\xi^l, \\
\mathcal{B}_w(\xi)&:=B_{w0}+B_{w1}\xi+B_{w2}\xi^2+\cdots+B_{wl}\xi^l.  
\nonumber
\end{split}
\end{equation}
The system \eqref{ARM} can be represented in the following specific state-space form
\begin{subequations}
\label{eq:3:11}
\begin{align}
\chi (t+1)&=
\hat{A} \chi(t)+\hat{B}u(t)+\hat{B}_1 w(t)+\hat{B}_d d(t), \label{eq:3:11-A} \\ 
y(t) &= 
 \bar{A}
 \chi(t)+B_{u0} u(t)+B_{w0} w(t)+B_{d0} d(t), \label{eq:3:11-B}
 \end{align}
\end{subequations} 
 where 
 \begin{equation}
 \label{ChiDefinition}
\chi(t):=\text{col} (
\begin{pmatrix}
y(t-1) \\ \vdots \\ y(t-l)
\end{pmatrix}, 
\begin{pmatrix}
u(t-1) \\ \vdots \\ u(t-l)
\end{pmatrix}, 
\begin{pmatrix}
w(t-1) \\ \vdots \\ w(t-l)
\end{pmatrix}
)
\end{equation} 
is the system state, $\hat{B}_d:=\text{col}(B_{d0},0)$ and 

 \begin{equation}
 \nonumber
 \bar{A}:= \begin{pmatrix} \underline{A} & \underline{B}_u & \underline{B}_w  \end{pmatrix}, 
\end{equation}
\begin{equation}
\nonumber
 \underline{A}:=
 \begin{pmatrix}
 -A_1 & -A_2 & \cdots &  -A_l 
 \end{pmatrix}, 
 \end{equation}
 \begin{equation}
\nonumber
  \underline{B}_u :=
 \begin{pmatrix}
 B_{u1} & B_{u1} & \cdots &  B_{ul} 
 \end{pmatrix}, 
 \end{equation}
 \begin{equation}
\nonumber
  \underline{B}_w :=
 \begin{pmatrix}
 B_{w1} & B_{w1} & \cdots &  B_{wl} 
 \end{pmatrix},  
  \end{equation}
 \begin{equation}
 \label{ExtendedStateRealization}
\hat{A} := \begin{pmatrix} \bar{A} \\ J_{A} \end{pmatrix},
\quad 
 \hat{B} :=\begin{pmatrix}
B_{u0} \\  
J_{B}
 \end{pmatrix}, 
 \quad
  \hat{B}_1
 :=
\begin{pmatrix}
B_{w0} \\  
J_{B_1}
 \end{pmatrix},
 \end{equation}
 \begin{equation}
\nonumber
\small{J_A}  \small{:=
\begin{pmatrix}
\begin{pmatrix} I_{p(l-1)} & 0 \end{pmatrix} & 0  & 0\\
0_{m\times pl} & 0 & 0 \\
 0 & \begin{pmatrix} I_{m(l-1)} & 0 \end{pmatrix} & 0 \\
 0 & 0_{m_w\times ml} & 0_{m_w \times m_w l} \\
  0 & 0 & \begin{pmatrix} I_{m_w(l-1)} & 0 \end{pmatrix} \\
 \end{pmatrix}}, 
 \end{equation}
 \begin{equation}
\nonumber
J_B :=\begin{pmatrix}
0_{(pl-p) \times m} \\ I_m \\ 0_{(ml-m+m_wl) \times m} 
\end{pmatrix}, 
\
 J_{B_1}:=\begin{pmatrix}
0_{(pl-p+ml)\times m_w} \\ I_{m_w} \\ 0_{(m_wl-m_w) \times m_w}
\end{pmatrix}.
 \end{equation}

In connection to the system \eqref{ARM}, we further define the controlled output $z(t) \in \mathbb{R}^{p_z}$ as follows 
\begin{equation}
\label{eq:3:11-C}
 z(t)=\hat{C}_1\chi (t) +E u(t) +D_1 w(t), 
 \end{equation}
where $\hat{C}_1 \in \mathbb{R}^{p_z\times (p+m+m_w)l}$, $E \in \mathbb{R}^{p_z\times m}$ and $D_1 \in \mathbb{R}^{p_z\times m_w}$. The complete open-loop system is therefore given by \eqref{eq:3:11}, \eqref{eq:3:11-C}, where  $w \rightarrow z$ is the \emph{performance channel} on which we impose the desired closed loop specifications in terms of dissipativity or $H_2$ performance.  
The channel $u \rightarrow y$ is available for control.

\begin{remark} 
\label{RemarkAR}
The system model \eqref{ARM} belongs to the class of AR models, where $l$ denotes the system lag (observability index of the minimal realization of the system).

We treat the output $y$ as a signal that is measured in real time and is available for control. However, the results of the paper can be directly applied in the case that only some elements of the output vector $y$ are used for control, while the complete $y$ is used in controller synthesis only. More precisely, the following scenario is possible: during the data recording process we measure (collect data) of all elements of $y$, while the controller is designed in which the controller input is formed of only a subset of elements of $y$. The remaining part of $y$ (which is not controller input) enhances the controller synthesis as the synthesis exploits the information of the system's dynamics that is present in these signals/elements as well.

The disturbance $d$ is not part of the performance channel $w \rightarrow z$ and represents noisy disturbance (noise present during data recording process) or could be used to model  uncertainties, with control robustification as the goal.

Finally, the corresponding state space realization given by \eqref{eq:3:11} is characterized by highly structured state-space matrices \eqref{ExtendedStateRealization}. Exploiting this structure plays an important part in devising data-driven controller synthesis solutions.~$\hfill{\Box}$  
\end{remark}

\subsection{The unknown matrices and the known data}
We assume that we have no knowledge of the matrices $\bar{A}$, $B_{u0}$ and $B_{w0}$, and therefore also $\hat{A}$, $\hat{B}$, $\hat{B}_1$, see \eqref{ExtendedStateRealization}. However, we assume that we know a finite time trajectory (of length $l+N $ time steps) of the control input $u$, the disturbance input $w$ and the corresponding output trajectory $y$. The  disturbance input $d$ is not known. 
More precisely, the input-output data represented by matrices 
\begin{equation}
\label{RecordedData}
\begin{split}
Y & :=
\begin{pmatrix}
y(0) & y(1) & \cdots & y(N-1)
\end{pmatrix}), \\ 
X & :=
\begin{pmatrix}
\chi(0) & \chi(1) & \cdots & \chi(N-1)
\end{pmatrix}, \\
U & :=
\begin{pmatrix}
u(0) & u(1) & \cdots & u(N-1)
\end{pmatrix}, \\
W & :=
\begin{pmatrix}
w(0) & w(1) & \cdots & w(N-1)
\end{pmatrix}
\end{split}
\end{equation}  
is known, while the disturbance data matrix 
\begin{equation}
\label{NoiseData}
W_d :=
\begin{pmatrix}
d(0) & d(1) & \cdots & d(N-1)
\end{pmatrix}
\end{equation}
is unknown. 
The data matrices satisfy the 
equation
\begin{equation}
\label{DataModel}
Y = 
 \bar{A}
X+B_{u0} U+B_{w0} W+B_{d0}W_d.
\end{equation}

\begin{remark}
Trajectory of $w$ is considered to be measured during the data recording process.  Defined as the disturbance in the performance channel, one might ask whether it is then reasonable to know this signal during the conducted recording experiment.
We argue that for many real systems this is indeed the case.
For example, $w$ might not in fact be disturbance but reference signals, while the output of the performance channel contains the tracking errors (this case is presented as an example in Section~\ref{Example1}). Reference signals are known during the data recording process.  

Alternatively, $w$ might indeed be a disturbance, but it can be enforced in a controlled way during the experiment. For example, in a mechanical system we can often enforce forces in a controlled way, which will during exploitation in the system be uncontrolled disturbances. As example of such case in presented in Section~\ref{Example2}. There, active suspension of a car is considered and $w$ is a road profile. Indeed, this profile can be designed/measured during the data recording process.

Furthermore, we argue that such problem setting (in which $w$ is recorded during the experiment), is necessary if we lack the knowledge of how the disturbance enters the dynamics. Then the data has to replace the model, and for information to be present in the data, the signal $w$ must be known and persistently exciting of a sufficiently large order.

\end{remark}

\subsection{Assumptions regarding the known data}
Data-driven controller synthesis methods necessarily rely some assumptions regarding the data (e.g., persistency of excitation). In particular, all the results mentioned in Section~\ref{IntroductionLabel} rely on some constructed system realization 
for which the state data matrix has full row rank.

Along the same line, 
we introduce the following assumption.
\begin{assumption}
\label{Assumption-1}
$\chi(t)\in \text{im}(X)$ for all $t\in \mathbb{N}_{\geq 0}$ and any $u(t) \in \mathbb{R}^m$, $w(t)\in \mathbb{R}^{m_w}$ and $d(t) \in \mathbb{R}^{m_d}$.~$\hfill{\Box}$
\end{assumption}
The above assumption is verifiable under the following conditions. Consider the following compact\footnote{The singular value matrix is a square matrix which contains only the non-zero singular values.} singular value decomposition 
\begin{equation}
\label{SVD}
X=X_{s}\Sigma_{\chi} X_{r}^\top, 
\end{equation}
and introduce the following abbreviation $X_d:=\Sigma_{\chi} X_{r}^\top$.
The matrices $X_{s} \in \mathbb{R}^{(p+m+m_w)l\times \tilde{n}}$ and $X_d \in \mathbb{R}^{\tilde{n}\times N} $ are a full column rank matrix and a full row rank matrix, respectively.
Furthermore, note that from \eqref{eq:3:11-A} it follows that 
 Assumption~\ref{Assumption-1} holds if and only if $\text{im}(\begin{pmatrix} \hat{A} X_{s} & \hat{B} &  \hat{B}_1 & \hat{B}_d \end{pmatrix} )\subseteq \text{im}(X_{s})$.
Therefore, we can define conditions under which Assumption~\ref{Assumption-1} holds using the persistency of excitation 
\cite[Corollary 2 (iii)]{c1} and  output representation \cite[Expression (4)]{R33}:
\begin{itemize}
\item[\emph{i})]
In the noiseless case, if the input $\text{col}(u,w)$ is persistently exciting of order $n+l$, then Assumption~\ref{Assumption-1} holds if and only if $\text{im}(\hat{B}_d) \subseteq  \text{im}(X)$.
\item[\emph{ii})] In the noisy case, if the input $\text{col}(u,w,d)$ is persistently exciting  of order $n+l$, then Assumption~\ref{Assumption-1} holds. 
Although we do not control the input $d$,  the Assumption~\ref{Assumption-1} is achieved for almost any\footnote{The set of signals $d$ which would ``cancel out'' effects of $u$ and $w$ on the system in a way that results in reduction of $\text{im}(X)$ is of measure zero.}  $d$ if the input $\text{col}(u,w)$ is persistently exciting of order $n + l$, and the assumption can be verified by checking whether $\text{im}(\hat{B}_d) \subseteq \text{im}(X)$.

\end{itemize}
Note that the matrix $\hat{B}_d$, which appears in both \emph{i}) and \emph{ii}),  is assumed to be known. 
Thus, both \emph{i}) and \emph{ii}) can be verified after the data recording process. Additionally, the stated conditions can be used to design the data recording experiment (requirements of persistency of excitation). They also provide information of when the data recording can be terminated (when $\text{im}(\hat{B}_d) \subseteq \text{im}(X)$).

Next, we make an assumption which will be instrumental for the construction of controller synthesis methods.
\begin{assumption}
\label{Assumption-2}
First $p$ rows of matrix $X$ are linearly independent.~$\hfill{\Box}$
\end{assumption}

Recall that $p$ denotes the number of elements in $y(t)$, i.e., $y(t)\in \mathbb{R}^p$. 
The statement in Assumption~\ref{Assumption-2} is necessarily satisfied when the output $y$ is controllable with respect to some input which is persistently exciting of order $n+1$ \cite[Corollary 2 (ii)]{c1}. This follows directly from the formulation of the output $y$ based on matrices of some minimal realization of the system \eqref{ARM} with accordance to persistently exciting input.

\subsection{The disturbance model}
\label{The disturbance modelSubsection}
The disturbance $d$ affecting the data recording is assumed to be bounded. This is modeled using a quadratic matrix inequality (QMI) imposed on $W_d$, as done in \cite{c6}. 
\begin{assumption}
\label{Assumption1}
The matrix $W_d$ from \eqref{NoiseData} satisfies the inequality  
\begin{equation}
\label{eq:2:6}
\begin{pmatrix}
I \\
W_d^\top\\
\end{pmatrix}^\top
\underbrace{
\begin{pmatrix}
\Phi_{11} & \Phi_{12}\\
\Phi_{12}^\top & \Phi_{22}
\end{pmatrix}}_\Phi
\begin{pmatrix}
I \\
W_d^\top\\
\end{pmatrix} \succeq 0,
\end{equation}
where $\Phi=\Phi^\top \in \mathbb{R}^{(m_d+N)\times(m_d+N)}$ is a known matrix with $\Phi_{22} \prec 0$. ~$\hfill{\Box}$   
\end{assumption}
Note that $ \Phi_{22} \prec 0$, $\Phi_{22} \in \mathbb{R}^{N \times N}$, ensures that the set of matrices $W_d$ which satisfy \eqref{eq:2:6} is bounded.

\subsection{The set of plants consistent with data}
\label{SetOfSystemsSubsection}

Recall that the matrix $ \bar{A}$ is an unknown matrix, thus, equation \eqref{DataModel} can be rewritten in the following manner
\begin{equation}
\label{DataModel-1}
Y = \bar{A}_s
X_d+B_{u0} U+B_{w0} W+B_{d0}W_d,
\end{equation}
where
\begin{equation}
\nonumber
\bar{A}_s:=\bar{A}X_{s}.
\end{equation}
Note that since $X_{s}$ has a full column rank, the above definition of $\bar{A}_s$ implies that $\bar{A}_s$ can be arbitrary, i.e., the definition itself imposes no constraints on it (e.g., constraints as predefined rank).
Let $\Sigma$ be the set of all triples ($ \bar{A}_s $, $B_{u0}$, $B_{w0}$) which can explain the data ($Y$, $X_d$, $U$, $W$, $W_d$), that is
\begin{equation}
\begin{split}
\Sigma :=
\{(\bar{A}_s,B_{u0}, B_{w0}) \, &| \, \eqref{DataModel-1} \text{ holds for some }  W_d
\\ &
 \text{ which satisfy }  \eqref{eq:2:6}\}. 
\end{split}
\nonumber
\end{equation}
The set $\Sigma$ can be described using a QMI, as presented in the following lemma.  
\begin{lemma}
\label{Lemma3}
The set $\Sigma$ is a set of all triples ($\bar{A}_s $,$B_{u0}$,$B_{w0}$) that satisfy the following inequality
\begin{equation}
\label{eq:2:3}
\begin{pmatrix}
I   \\ 
\bar{A}_s^\top \\
B_{u0}^\top  \\
B_{w0}^\top  \\
\end{pmatrix}^\top
\underbrace{
\begin{pmatrix}
H_{11}  & H_{12}  & H_{13}  & H_{14}  \\
H_{12}^\top & H_{22}  & H_{23} & H_{24} \\
H_{13}^\top & H_{23}^\top  & H_{33} & H_{34} \\
H_{14}^\top & H_{24}^\top  & H_{34}^\top & H_{34} \\
 \end{pmatrix}}_{H}
\begin{pmatrix}
I   \\ 
\bar{A}_s^\top \\
B_{u0}^\top  \\
B_{w0}^\top  \\
\end{pmatrix} \succeq 0,
\end{equation}
where
\begin{equation}
\nonumber
H:=
\begin{pmatrix}
\star
\end{pmatrix}
\begin{pmatrix}
\Phi_{11}  & \Phi_{12}\\ 
\Phi_{12}^\top & \Phi_{22}
\end{pmatrix}
\begin{pmatrix}
B_{d0} & Y  \\ \hline
0 &  -X_d \\
0 & -U \\
0 & -W
\end{pmatrix}^\top.
\end{equation}
\end{lemma}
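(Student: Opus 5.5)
The plan is a direct computation that reduces the QMI \eqref{eq:2:3} to the noise bound \eqref{eq:2:6} through the data equation \eqref{DataModel-1}; no earlier theorem is needed, only Assumption~\ref{Assumption1} and the full column rank of $B_{d0}$. The first step is to multiply out the left-hand side of \eqref{eq:2:3} using the factorization of $H$. Write $\Delta:=Y-\bar{A}_sX_d-B_{u0}U-B_{w0}W\in\mathbb{R}^{p\times N}$. Then
\begin{equation}
\nonumber
\begin{pmatrix} I & \bar{A}_s & B_{u0} & B_{w0}\end{pmatrix}
\begin{pmatrix}
B_{d0} & Y \\
0 & -X_d \\
0 & -U \\
0 & -W
\end{pmatrix}
=\begin{pmatrix} B_{d0} & \Delta \end{pmatrix},
\end{equation}
so \eqref{eq:2:3} is equivalent to $\begin{pmatrix} B_{d0} & \Delta\end{pmatrix}\Phi\begin{pmatrix} B_{d0} & \Delta\end{pmatrix}^\top\succeq 0$.

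For the inclusion $\Sigma\subseteq\{\text{solutions of \eqref{eq:2:3}}\}$, take a triple in $\Sigma$. By \eqref{DataModel-1} there is a $W_d$ satisfying \eqref{eq:2:6} with $\Delta=B_{d0}W_d$. Hence $\begin{pmatrix} B_{d0} & \Delta\end{pmatrix}=B_{d0}\begin{pmatrix} I & W_d\end{pmatrix}$, and the quadratic form above equals $B_{d0}\,(\star)^\top\Phi\begin{pmatrix} I \\ W_d^\top\end{pmatrix}B_{d0}^\top$. This is a (possibly non-square) congruence of the positive semidefinite matrix in \eqref{eq:2:6}, so it is $\succeq 0$.

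The converse is the main obstacle, because $B_{d0}$ need not be square and we must first show that $\Delta$ lies in $\text{im}(B_{d0})$. I will use $\Phi_{22}\prec 0$ for this. Take any $v\in\ker(B_{d0}^\top)$ and evaluate the QMI on $v$; the $B_{d0}$ terms vanish and we get $v^\top\Delta\Phi_{22}\Delta^\top v\geq 0$. Since $\Phi_{22}\prec 0$, this forces $\Delta^\top v=0$. Therefore $\text{im}(\Delta)\subseteq\ker(B_{d0}^\top)^\perp=\text{im}(B_{d0})$.

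Now define $W_d:=B_{d0}^\dagger\Delta$. Then $B_{d0}W_d=\Delta$, so \eqref{DataModel-1} holds. The QMI becomes $B_{d0}\,M\,B_{d0}^\top\succeq 0$, where $M$ is the left-hand side of \eqref{eq:2:6} for this $W_d$. Because $B_{d0}$ has full column rank, $B_{d0}^\dagger B_{d0}=I_{m_d}$. Multiplying on the left by $B_{d0}^\dagger$ and on the right by $(B_{d0}^\dagger)^\top$ therefore gives $M\succeq 0$, i.e.\ \eqref{eq:2:6} holds. Hence the triple belongs to $\Sigma$, which completes the equivalence.
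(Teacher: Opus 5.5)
Your proof is correct and follows the same route as the paper's: a congruence with the full-column-rank matrix $B_{d0}$, combined with substituting $B_{d0}W_d$ via the data equation \eqref{DataModel-1}. It is more complete than the paper's two-line argument, which never shows why a triple satisfying \eqref{eq:2:3} has residual $Y-\bar{A}_sX_d-B_{u0}U-B_{w0}W$ in $\text{im}(B_{d0})$. Your use of $\Phi_{22}\prec 0$ on $\ker(B_{d0}^\top)$, followed by the choice $W_d:=B_{d0}^\dagger(Y-\bar{A}_sX_d-B_{u0}U-B_{w0}W)$, closes exactly that gap.
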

\begin{proof}
If we multiply \eqref{eq:2:6} with matrices $B_{d0}$ and $B_{d0}^\top$ from the left and right side, respectively, then we obtain an equivalent matrix inequality since matrix $B_{d0}$ has a full column rank. Next, if we substitute  $B_{d0} W_d$  using \eqref{DataModel-1} we obtain \eqref{eq:2:3}.
\end{proof}

Finally, we will need the following assumption, which will be instrumental for application of the matrix S-lemma in the proof of the main result.
 \begin{assumption}
\label{Assumption-3}
The set $\Sigma$ is a nonempty set and we have that $\text{rank}(\text{col}(X_d,U,W))=\tilde{n}+m+m_w$.
~$\hfill{\Box}$
\end{assumption}

Recall that $\text{rank}(X_s)=\tilde{n}$, see \eqref{SVD}.
Thus, the rank condition from  Assumption~\ref{Assumption-3} can be achieved/verified in the same manner as done in connection to Assumption~\ref{Assumption-1}, with the difference that we need $n+l+1$ as the order of persistency of excitation instead of $n+l$.
Note with  Assumption~\ref{Assumption-3} we have 
\begin{equation}
\label{SigmaIsBounded}
\begin{pmatrix}
 H_{22}  & H_{23} & H_{24} \\
 H_{23}^\top  & H_{33} & H_{34} \\
H_{24}^\top  & H_{34}^\top & H_{34} \\
 \end{pmatrix} \prec 0,
\end{equation}
thus, set $\Sigma$ is bounded \cite{c21}.

\subsection{The data-driven realization of the closed-loop system}
\label{SynthesisConstructionTools}

Let $X_{s\perp}$  be a full column rank matrix whose columns form a basis of $\text{ker}(X_{s}^\top)$ and let $\chi_{s}(t)$ and $\chi_{s\perp}(t)$ be uniquely defined with the following relation
\begin{equation}
\nonumber
\chi(t)=
\begin{pmatrix}
X_{s} & X_{s\perp}
\end{pmatrix} 
\begin{pmatrix}
\chi_{s}(t) \\
\chi_{s\perp}(t)
\end{pmatrix}.
\end{equation}
With Assumption~\ref{Assumption-1}, the open-loop system $\text{col}(w,u)\rightarrow z$ is described  using the following realization
\begin{equation}
\label{ReduciraniModel}
\begin{split}
\begin{pmatrix}
\chi_s(t+1) \\ z(t) 
\end{pmatrix}=
\begin{pmatrix}
A & B_1 & B \\
C_1 & D_1 & E
\end{pmatrix}
\begin{pmatrix}
\chi_s (t) \\ w(t) \\ u(t)
\end{pmatrix},
 \end{split}
\end{equation}
where $A:=X_{s}^\top \hat{A} X_{s}$, $B_1:=X_{s}^\top \hat{B}_1$, $B:=X_{s}^\top \hat{B}$ and $C_1:=\hat{C}_1X_{s}$. Note that $A \in \mathbb{R}^{\tilde{n}\times \tilde{n}}$, where $\tilde{n}$ is the dimension of the $\text{im}(X)$, see \eqref{SVD}, 
and that the system realization \eqref{ReduciraniModel} is valid due to the invariance of the state subspace based on persistency of excitation (Assumption~\ref{Assumption-1}).

In addition to the open-loop system \eqref{ReduciraniModel} we define the following output available for control 
\begin{equation}
\label{eq:3:11-D}
y_c(t)= \hat{C}  \chi(t)=C \chi_s(t),
\end{equation}
where $C:=\hat{C} X_s$. Note that we construct the matrix $\hat{C}$ to chose arbitrary elements 
 of the state vector \eqref{ChiDefinition} which are going to be used as controller input (except elements of the disturbance input $w$). See also Remark~\ref{RemarkAR}.

Next, consider the following dynamic output-feedback controller
\begin{equation}
\label{controller}
\begin{pmatrix}
x_{c} (t+1)\\
u (t)
\end{pmatrix}
=
\begin{pmatrix}
A_{c} & B_c \\
C_c & D_c
\end{pmatrix}
\begin{pmatrix}
x_{c} (t) \\
y_c (t)
\end{pmatrix},
\end{equation}
where the controller is of the same order as the open-loop system defined in \eqref{ReduciraniModel}, i.e.,  $x_{c} (t) \in \mathbb{R}^{\tilde{n}}$. 
The system \eqref{ReduciraniModel} and the controller \eqref{controller} 
form the closed-loop system
\begin{equation}
\label{closed_loop}
\begin{pmatrix}
\xi (t+1) \\
z (t)
\end{pmatrix}
=
\begin{pmatrix}
\tilde{A} & \tilde{B} \\
\tilde{C} & \tilde{D} 
\end{pmatrix}
\begin{pmatrix}
\xi (t) \\ w (t)
\end{pmatrix},
\end{equation}
where $\xi (t)=\text{col}(\chi_s (t),x_{c} (t))$  and
\begin{equation}
\label{closed_loop_matrices}
\begin{pmatrix}
\tilde{A} & \tilde{B} \\
\tilde{C} & \tilde{D} 
\end{pmatrix}
=
\begin{pmatrix}
\begin{array}{cc|c}
A+BD_cC & BC_c & B_1 \\
B_cC & A_c & 0 \\ \hline
C_1+ED_cC & EC_c & D_1
\end{array}
\end{pmatrix}.
\end{equation}

\subsection{The control problem}
\label{TheControlProblem}

\label{TheControlProblem}
In this paper, we are concerned with the following control problem. 
\begin{problem}
\label{Problem1}
Consider the system \eqref{ARM}, the controlled output \eqref{eq:3:11-C}, the output available for control \eqref{eq:3:11-D}, the controller \eqref{controller}, and suppose that we have the following knowledge: the recorded input-output data ($Y$, $X$, $U$, $W$) as defined in \eqref{RecordedData}; the disturbance bound  specified by $\Phi$ in Assumption~\ref{Assumption1}.
Let Assumptions~\ref{Assumption-1}, \ref{Assumption-2} and \ref{Assumption-3} hold.  
Recall that the considered system has the representation \eqref{ReduciraniModel}, and with controller \eqref{controller} it forms the closed-loop system \eqref{closed_loop}.
Consider two separate synthesis goals:
\begin{enumerate}
 \item[\emph{a})]
For a given performance supply function  \eqref{supplyFunction} with  $R \succeq 0$,
design an  output-feedback controller \eqref{controller} such that the closed-loop system \eqref{closed_loop} is stable and strictly dissipative with respect to a given supply function for all triples $(\bar{A}_s, B_{u0}, B_{w0}) \in \Sigma$.
 \item[\emph{b})] 
For a given $H_2$ performance $\mu$, 
design a output-feedback controller \eqref{controller} such that the performance channel $w \rightarrow z$ of the closed-loop system \eqref{closed_loop} achieves a given $H_2$ performance for all  triples $(\bar{A}_s, B_{u0}, B_{w0}) \in \Sigma$.~$\hfill{\Box}$
\end{enumerate}   
\end{problem}

Recall that the disturbance $w$ is used only for the purpose of
controller synthesis, while the designed controller uses only output $y$ and input $u$, i.e., the full state is not used for control in real time. 
    
\section{Controller synthesis}
\label{SolutionProblem1}

The two theorems presented in this section provide the constructive controller synthesis methods for the control problems defined in the previous section. Note that the statements in these theorems are indeed the statements of the dissipativity and the $H_2$ performance level, as presented in Sections~\ref{SecDissipativity} and \ref{H2performanceSubsection}, respectively.
 
\begin{figure*}[b]
 \par\noindent\rule{\textwidth}{0.5pt}
\begin{equation}
\tag{CS}
\label{eq:3:5}
\begin{gathered}
\Pi:=
\begin{pmatrix}
\Pi_{11} & \Pi_{12} \\
\Pi_{12}^\top & \Pi_{22} 
\end{pmatrix}, \quad
\Pi_{11}:=
\begin{pmatrix}
\bar{X} & I & \star  & \star   \\
I & \bar{Y} & \star   & \star\\
-S(C_1+ED_cC) &  -S(C_1\bar{Y}+E\tilde{C}_c) & -Q-\text{He}\{SD_1 \}  &   \star \\
T^\top(C_1+ED_cC) & T^\top(C_1\bar{Y}+E\tilde{C}_c) & T^\top D_1 & \tilde{R}^{-1}
\end{pmatrix}, \\
\Pi_{12}^\top:= 
\begin{pmatrix}
\check{A}+\check{B}D_cC & \check{A}\bar{Y}+\check{B}\tilde{C}_c  & \check{B}_1 & 0_{\tilde{n} \times \tilde{p}_z} \\
\bar{X}\check{A}+\tilde{B}_cC& \tilde{A}_c   &  \bar{X}\check{B}_1  & 0 \\
I & \bar{Y} & 0 & 0 \\
D_cC & \tilde{C}_c & 0 & 0 \\
0 & 0 & I_{m_w} & 0 \\
0_{\tilde{p} \times \tilde{n}} & 0 & 0 & 0
\end{pmatrix}, 
\\
\Pi_{22}:=
\begin{pmatrix}
\bar{Y} & I & \star  & \star  &  \star &  \star \\
I & \bar{X} &  \star  & \star &  \star &  \star \\
\begin{pmatrix} 0 &  -\alpha \bar{H}_{12}^\top  \end{pmatrix} L^{-\top}  & \begin{pmatrix} 0 &  -\alpha \bar{H}_{12}^\top  \end{pmatrix}  L^{-\top}\bar{X} &   -\alpha  H_{22} & \star & \star &  \star \\
\begin{pmatrix} 0 &  -\alpha \bar{H}_{13}^\top  \end{pmatrix} L^{-\top} & \begin{pmatrix} 0 &  -\alpha \bar{H}_{13}^\top  \end{pmatrix}  L^{-\top}\bar{X}   &   -\alpha  H_{23}^\top & -\alpha  H_{33}^\top & \star  &  \star \\
\begin{pmatrix} 0 &  -\alpha \bar{H}_{14}^\top  \end{pmatrix} L^{-\top}  & \begin{pmatrix} 0 &  -\alpha \bar{H}_{14}^\top  \end{pmatrix}  L^{-\top}\bar{X}   &   -\alpha  H_{24}^\top & -\alpha  H_{34}^\top & -\alpha H_{44} &  \star\\
\begin{pmatrix} 0 & \alpha \Xi^\top \end{pmatrix} L^{-\top} & \begin{pmatrix} 0 & \alpha \Xi^\top \end{pmatrix} L^{-\top}\bar{X}  &  0 & 0 & 0 &  \alpha \Lambda^{-1}  \\
\end{pmatrix}, 
\\
\Delta :=
\begin{pmatrix}
I & D_1^\top & 0 & 0 \\
\star & Z & C_1+ED_cC & C_1 \bar{Y}+E \tilde{C}_c \\
\star & \star & \bar{X} & I \\
\star & \star & \star & \bar{Y}
\end{pmatrix}.
\end{gathered}
\end{equation}
\end{figure*}
\begin{theorem}
\label{Theorem1}
Consider the Problem~\ref{Problem1}a, the set of matrices defined in Section~\ref{Appendix0} in the Appendix and the matrix  $\Pi$ defined in expression \eqref{eq:3:5} presented at the bottom of next page. In connection to the matrix $R\succeq 0$, from the definition of the supply rate, let  $\tilde{R} \in \mathbb{R}^{\tilde{p}_z \times \tilde{p}_z  }$ be a positive definite matrix defined by arbitrary factorization  $R=T\tilde{R} T^\top$.
Then, the following two statements are equivalent:
\begin{enumerate}
 \item[\emph{i})]
The inequalities
 \begin{subequations}
 \label{eq:3:4}
 \begin{align}
 & P \succ 0, \label{E1} \\ 
 & \begin{pmatrix}
\star
\end{pmatrix}^\top
\begin{pmatrix}
-P & 0 & 0 & 0  \\
0& P & 0 & 0   \\
0 & 0 & Q & S\\
0 & 0 & S^\top & R\\
\end{pmatrix}\begin{pmatrix}
I & 0 \\
\tilde{A} & \tilde{B} \\
0 & I\\
\tilde{C} & \tilde{D}  \\
\end{pmatrix}
\prec 0, \label{E2}
 \end{align}
 \end{subequations}
hold for all $(\bar{A}_s, B_{u0}, B_{w0})\in \Sigma$.
\item[\emph{ii})]
The inequality
\begin{equation}
\label{eq:600:301}
\Pi \succ 0,
\end{equation}
holds for some real scalar $\alpha$, symmetric matrices $\bar{X}$ and $\bar{Y}$, unstructured matrices $\tilde{A}_c$, $\tilde{B}_c$, $\tilde{C}_c$ and $D_c$. 
\end{enumerate}
Furthermore, the controller parameters can be reconstructed from \eqref{eq:600:301} as follows
\begin{equation}
\label{eq:3:18}
\begin{gathered}
V^\top=\bar{U}^{-1}(I-\bar{X}\bar{Y}), \\
B_c=\bar{U}^{-1}(\tilde{B}_c-\bar{X}\check{B}D_c), \\
C_c=(\tilde{C}_c-D_cC\bar{Y})V^{-\top}, \\
A_c=\bar{U}^{-1}(\tilde{A}_c-\bar{X}(\check{A}+\check{B} \tilde{C}_c)-\bar{U}B_cC\bar{Y})V^{-\top},
\end{gathered}
\end{equation}
where $U$ is an arbitrary full rank matrix. 
\end{theorem}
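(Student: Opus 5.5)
The plan is to follow the standard route for dynamic output-feedback synthesis (Scherer--Gahinet--Chilali linearizing change of variables), combined with a separation of the unknown and known parts of the closed-loop matrices, and then the matrix S-lemma (Theorem~\ref{Theorem0}) over $\Sigma$. Note first that only $A$, $B$, $B_1$ in \eqref{ReduciraniModel} depend on the unknown triple. Because of the shift structure \eqref{ExtendedStateRealization}, only the first $p$ rows of $\hat A,\hat B,\hat B_1$ are unknown. So I would write $A=\check A+ L_0\bar A_s$, $B=\check B+L_0B_{u0}$, $B_1=\check B_1+L_0B_{w0}$, with $L_0:=X_s^\top\text{col}(I_p,0)$ and $\check A,\check B,\check B_1$ built from $J_A,J_B,J_{B_1}$ (the Appendix quantities). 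Assumption~\ref{Assumption-2} should guarantee that $L_0$ has full column rank. I expect it to enter through an invertible $L$ that completes $L_0$ to a basis of $\mathbb{R}^{\tilde n}$, so that $L^{-1}L_0=\text{col}(0,I_p)$. This is the origin of the factors $\begin{pmatrix}0 & \cdot\end{pmatrix}L^{-\top}$ in $\Pi_{22}$.

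Step one, for fixed data: \eqref{E2} with $P\succ0$ and $R=T\tilde RT^\top$ is equivalent, via Schur complements with respect to $P^{-1}$ and $\tilde R^{-1}$, to an inequality that is affine in $(\tilde A,\tilde B,\tilde C,\tilde D)$. Next I partition $P=\begin{pmatrix}\bar X&\bar U\\ \bar U^\top&\star\end{pmatrix}$ and $P^{-1}=\begin{pmatrix}\bar Y&V\\V^\top&\star\end{pmatrix}$, apply the congruence with $\text{diag}(\mathcal Y,\mathcal Y,I,I)$ where $\mathcal Y=\begin{pmatrix}\bar Y & I\\ V^\top&0\end{pmatrix}$, and substitute the new variables $\tilde A_c,\tilde B_c,\tilde C_c,D_c$ defined by inverting \eqref{eq:3:18}. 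The result is an inequality linear in the decision variables in which the true $A,B,B_1$ appear. Inserting the decomposition $A=\check A+L_0\bar A_s$ (and likewise for $B$, $B_1$) turns that inequality into $\Pi_{11}$--$\Pi_{12}$--$\text{diag}(\bar Y\,I;I\,\bar X)$-type known terms plus a term of the form $\text{He}\{\mathcal F^\top \text{col}(\bar A_s^\top,B_{u0}^\top,B_{w0}^\top)\,\mathcal G\}$. Here $\mathcal G$ collects $\text{col}(I,\bar Y)$, $D_cC$, $\tilde C_c$, and $I_{m_w}$ (the last rows of $\Pi_{12}^\top$), and $\mathcal F$ is built from $L_0$ multiplied by $\bar Y$ / $\bar X$ blocks.

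Step two, robustification: I would rewrite this as a quadratic form $\begin{pmatrix}I\\ Z\end{pmatrix}^\top M\begin{pmatrix}I\\ Z\end{pmatrix}\succ0$ in $Z=\text{col}(\bar A_s^\top,B_{u0}^\top,B_{w0}^\top)$, where $M$ has a zero $(2,2)$ block, and invoke Theorem~\ref{Theorem0} with $H$ from Lemma~\ref{Lemma3}. Its hypotheses are that the $(2,2)$ block of $H$ is negative definite, which is \eqref{SigmaIsBounded}, and that $\Sigma\neq\emptyset$; both hold by Assumption~\ref{Assumption-3}. Robust feasibility is therefore equivalent to $M-\alpha H\succ0$ for some $\alpha\ge0$. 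Since $H$ carries the $p$-dimensional "first-row" factor, I would factor $H_{11}$ (the $\bar H$, $\Xi$, $\Lambda$ of the Appendix) and take Schur complements with respect to the $-\alpha H_{22..44}$ block and $\alpha\Lambda^{-1}$ to reach the structure of $\Pi_{22}$. The positivity of $-\alpha H_{22..44}$ already forces $\alpha>0$. Necessity follows because the S-lemma is an equivalence and every other step is an equivalence, using invertibility of $\bar U,V$, which may be assumed after a small perturbation since $I-\bar X\bar Y$ is generically nonsingular.

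The main obstacle is step two's bookkeeping. The unknown $Z$ multiplies both sides ($\mathcal F$ contains decision variables $\bar X$ and $\bar Y$), so the term $\alpha H_{11}$ couples with $\mathcal F$ in a non-affine way. I would first try to absorb $L^{-\top}$ and $\bar X$ into the "$\Pi_{22}$" side by a congruence before applying the S-lemma, so that $\alpha$ is the only nonconvex parameter. This yields LMIs for fixed $\alpha$, consistent with the claimed line-search structure. Finally I would check that the reconstruction \eqref{eq:3:18} inverts the change of variables.
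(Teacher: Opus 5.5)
Your Step two fails as written, and the missing piece is the main idea of the proof, not bookkeeping. After your linearizing change of variables, the unknowns enter the full inequality (of size about $4\tilde n+m_w+\tilde p_z$) affinely, through a term $\text{He}\{\mathcal F^\top\begin{pmatrix}\bar A_s & B_{u0} & B_{w0}\end{pmatrix}\mathcal G\}$. Theorem~\ref{Theorem0}, combined with the QMI of Lemma~\ref{Lemma3}, applies only to a $p\times p$ inequality of the form $\begin{pmatrix} I\\ Z\end{pmatrix}^\top M\begin{pmatrix} I\\ Z\end{pmatrix}\succ 0$ with $Z=\text{col}(\bar A_s^\top,B_{u0}^\top,B_{w0}^\top)$, which has $p$ columns. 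A large inequality that is affine in $Z$ cannot be rewritten in that form, and certainly not with a zero $(2,2)$ block in $M$.

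What you need is to reduce the robust condition to a $p\times p$ QMI first. The paper does this before any linearization, in the form $\Theta\succ0$ where $P^{-1}$ is a diagonal block:
\begin{itemize}
\item[\emph{(a)}] Apply the congruence $\text{diag}(I,S_2)$, built from $L$ with $LX_s^\top=K$; this is exactly where Assumption~\ref{Assumption-2} is used. After it, the unknowns occupy only the last $p$ rows and columns, and the complementary block $\tilde\Theta_{11}$ is uncertainty-free.
\item[\emph{(b)}] A Schur complement with respect to $\tilde\Theta_{11}$ gives $\Gamma(\text{diag}(\tilde\Theta_{22},0)-\tilde\Gamma\tilde\Theta_{11}^{-1}\tilde\Gamma^\top)\Gamma^\top\succ0$ with $\Gamma=\begin{pmatrix} I&\bar A_s&B_{u0}&B_{w0}\end{pmatrix}$. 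This is a genuine QMI in $Z$. Its $(2,2)$ block is $-\tilde\Gamma_Z\tilde\Theta_{11}^{-1}\tilde\Gamma_Z^\top$, where $\tilde\Gamma_Z$ are the rows of $\tilde\Gamma$ multiplying $Z$; it is negative semidefinite and generally nonzero.
\item[\emph{(c)}] Only now is Theorem~\ref{Theorem0} invoked, pointwise in the decision variables, so the nonlinearity $\tilde\Theta_{11}^{-1}$ is harmless.
\item[\emph{(d)}] Re-center at $(\check A_s,\check B_{u0},\check B_{w0})$ via $S_3$, which turns $H$ into $\bar H$ with corner $\Xi\Lambda\Xi^\top$.
\item[\emph{(e)}] Undo the Schur complement with respect to $\tilde\Theta_{11}$; this creates the extra $\tilde n+m+m_w$ rows carrying $-\alpha H_{ij}$ in $\Pi_{22}$. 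Then take a Schur complement with respect to $\alpha\Lambda^{-1}$.
\item[\emph{(f)}] Apply the linearizing congruences $S_7,S_8$ last.
\end{itemize}

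The ordering matters. Once you have linearized, the factor multiplying the unknowns from the left is $\text{col}(L_0,\bar X L_0)$, which depends on the decision variable $\bar X$. The $p$ uncertain rows can then no longer be isolated by a data-only congruence. Isolating them would amount to undoing your change of variables, which your last paragraph gestures at without resolving. This ordering is also where the product $\alpha\bar X$ in $\Pi$ comes from: $S_4S_5$ mixes the $p$ rows carrying the $\alpha$-dependent coupling terms ($-\alpha\bar H_{1j}$, $\alpha\Xi$) with $\bar X$. If you want to keep your ordering, you need a lossless tool that handles an affine uncertain term directly, such as a matrix Petersen-type lemma, rather than Theorem~\ref{Theorem0}.

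Two smaller points. With the Appendix's $\check A$, which already contains $\check A_s$, the correct split is $A=\check A+L_0(\bar A_s-\check A_s)$. Also, nonsingularity of $I-\bar X\bar Y$ is not merely generic: it follows from $\begin{pmatrix}\bar Y&I\\ I&\bar X\end{pmatrix}\succ0$, which is a principal block of $\Pi$.
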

\begin{proof}
The proof is divided in several steps. 
In step 1, we rewrite \eqref{eq:3:4} in a form suitable for applying Theorem~\ref{Theorem0}. In step 2, we apply Theorem~\ref{Theorem0}. In step 3, we transform derived conditions to obtain \eqref{eq:600:301} and \eqref{eq:3:18}, thus, prove necessity. In step 4, we provide the final argumentation for sufficiency.

\noindent \emph{Step 1.}
Suppose that  \eqref{eq:3:4} holds.
By applying the Schur complement rule on \eqref{E2} with respect to the matrix $\text{diag}(-\tilde{R},-P)$, we obtain the equivalent expression to \eqref{eq:3:4}:
\begin{equation}
\label{eq:3:12}
\Theta:=
\begin{pmatrix}
\Theta_{11} & \star\\  
\begin{pmatrix}
T^\top \tilde{C} & T^\top \tilde{D} \\
\tilde{A} & \tilde{B}
\end{pmatrix} & \begin{pmatrix}
\tilde{R}^{-1} & 0 \\
0 & P^{-1}
\end{pmatrix}
\end{pmatrix}
 \succ 0,
\end{equation}
where 
\begin{equation}
\nonumber
\Theta_{11}:=
\begin{pmatrix}
\star
\end{pmatrix}^\top
\begin{pmatrix}
P & 0 & 0 \\
0 & -Q & -S \\
0 & -S^\top & 0
\end{pmatrix}
\begin{pmatrix}
I & 0 \\
0 & I \\
\tilde{C} & \tilde{D}
\end{pmatrix}.
\end{equation}
Let 
\begin{equation}
\label{P_MatrixStructure}
\begin{gathered}
P:=
\begin{pmatrix}
\bar{X}  & \bar{U} \\
\bar{U}^\top & M_{1}
\end{pmatrix}, \quad 
P^{-1}:=
\begin{pmatrix}
\bar{Y} & V \\
V^\top & M_{2}
\end{pmatrix}
, \nonumber 
\end{gathered}
\end{equation}
where the matrices $\bar{X}$, $M_1$, $\bar{Y}$ and $M_2$ are of size $\tilde{n}\times \tilde{n}$.
Note that according to the multiple controller realization argumentation shown in 
the Appendix, Section~\ref{AppendixA}, we can choose the matrix $\bar{U}$. For the purpose of constructing controller synthesis we
assume that $\bar{U}$ is an arbitrary regular matrix.
Next, we apply a congruence transformation on \eqref{eq:3:12} with respect to the matrix
$S_1:=\text{diag }(I, S_2 )$ in order to obtain the equivalent inequality 
\begin{equation}
\tilde{\Theta}:=S_1^\top\Theta S_1\succ 0, \nonumber 
\end{equation}
where $S_2:=\begin{small} \begin{pmatrix} 0 &  L^\top \\  I_{\tilde{n}} & 0 \end{pmatrix} \end{small}$. Recall \eqref{closed_loop_matrices}, see definitions of the matrices $L$ and $K$ and the relation \eqref{ipakTreba1} presented in Appendix~\ref{Appendix0}. We have the following equalities
\begin{equation}
\nonumber
\begin{split}
L(A+BD_cC) = &\begin{pmatrix} 0 \\ \bar{A}_s
\end{pmatrix}
+\begin{pmatrix} 0 \\ B_{u0} \end{pmatrix}D_cC +
\\ &
\begin{pmatrix} K_{12} \\ K_{22} \end{pmatrix}( J_A X_s+J_BD_cC),
\end{split}
\end{equation}
\begin{equation}
\nonumber
LBC_c=\begin{pmatrix} 0 \\ B_{u0}  \end{pmatrix}C_c + \begin{pmatrix} K_{12} \\ K_{22} \end{pmatrix} J_B C_c,
\end{equation}
\begin{equation}
\nonumber
L B_1=\begin{pmatrix} 0 \\ B_{w0} \end{pmatrix} + \begin{pmatrix} K_{12} \\ K_{22} \end{pmatrix}  J_{B_1},
\end{equation}
\begin{equation}
\nonumber
S_2^\top
P^{-1} 
S_2=
\begin{pmatrix}
M_2 & V^\top L^\top \\
L V & L \bar{Y} L^\top
\end{pmatrix}.
\end{equation}
Let $\tilde{\Theta}_{11}$ be the upper left block diagonal matrix of $\tilde{\Theta}$ with size $(4\tilde{n}+m_w+\tilde{p}_z-p) \times  (4\tilde{n}+m_w+\tilde{p}_z-p)$, and  $\tilde{\Theta}_{22}$ be the lower right block diagonal matrix with size $p \times p$.
Then, by applying the Schur complement rule on $\tilde{\Theta} \succ 0$ with respect to the matrix $\tilde{\Theta}_{11}$, we obtain the equivalent condition
\begin{equation}
\label{eq:300:13}
\Gamma
(\begin{pmatrix}  \tilde{\Theta}_{22} & 0 \\ 0 & 0_{\tilde{n}+m+m_w} \end{pmatrix}-
\tilde{\Gamma} \tilde{\Theta}_{11}^{-1} 
\tilde{\Gamma}^\top
)
\Gamma^\top 
\succ 0, \quad \tilde{\Theta}_{11}\succ 0,
\end{equation}
where $\Gamma:=\begin{pmatrix} I & \bar{A}_s & B_{u0} & B_{w0} \end{pmatrix}$ and $\tilde{\Gamma}:=\begin{pmatrix}  \tilde{\Gamma}_1 & \tilde{\Gamma}_2 \end{pmatrix}$ with
\begin{equation}
\nonumber
\begin{split}
\tilde{\Gamma}_1 &:=
\begin{pmatrix}
K_{22}(J_AX_s+J_BD_c C) & K_{22} J_B C_c & K_{22} J_{B_1} \\
I_{\tilde{n}} & 0 & 0   \\
D_cC & C_c & 0  \\ 
0 & 0 & I_{m_w}  
\end{pmatrix},
\\
\tilde{\Gamma}_2 &:=
\begin{pmatrix}
 0_{p\times \tilde{p}_z} &  \begin{pmatrix} 0 & I_p \end{pmatrix} \begin{pmatrix} LV & L\bar{Y}L^\top \end{pmatrix} 
 \begin{pmatrix} I_{2\tilde{n}-p} \\ 0 \end{pmatrix} \\
 0 & 0  \\
\end{pmatrix}.
\end{split}
\end{equation}

\noindent \emph{Step 2.}
Note that according to Lemma~\ref{Lemma3}, inequality \eqref{eq:2:3} defines the set of all triples $(\bar{A}_s,B_{u0}, B_{w0}) $.  Furthermore,
the first matrix inequality in \eqref{eq:300:13}
and QMI \eqref{eq:2:3}
 are in a form suitable for the application of Theorem~\ref{Theorem0}. Also, 
 Assumptions~\ref{Assumption1} and \ref{Assumption-3} imply that $\Sigma$ is a nonempty  and bounded set.
Therefore, we can use Theorem~\ref{Theorem0} to state that \eqref{eq:300:13} holds for all triples $(\bar{A}_s,B_{u0}, B_{w0}) $ that satisfy the QMI \eqref{eq:2:3}, if and only if there exist $\alpha \geq 0$  such that
\begin{equation}
\label{eq:3:15}
\begin{pmatrix} \tilde{\Theta}_{22} & 0 \\ 0 & 0_{\tilde{n}+m+m_w} \end{pmatrix}-\alpha H -
\tilde{\Gamma} \tilde{\Theta}_{11}^{-1} 
\tilde{\Gamma}^\top 
 \succ 0, \quad \tilde{\Theta}_{11} \succ 0.
\end{equation}
\noindent \emph{Step 3.}
Note the definitions \eqref{ipakTreba3}, \eqref{set_element}, \eqref{ipakTreba2}, presented in Appendix~\ref{Appendix0}. By applying a congruence transformation on the first inequality in \eqref{eq:3:15} with respect to the matrix
\begin{equation}
S_3:=
\begin{pmatrix}
 I & 0 & 0  & 0 \\
 \check{A}_s^\top  & I & 0 & 0  \\
 \check{B}_{u0}^\top & 0 & I & 0  \\ 
  \check{B}_{w0}^\top & 0 & 0 & I  \\ 
\end{pmatrix}, \nonumber
\end{equation}
we obtain the equivalent expression
\begin{equation}
\label{eq:333:665}
\begin{pmatrix} \tilde{\Theta}_{22} & 0 \\ 0 & 0_{\tilde{n}+m+m_w} \end{pmatrix}-\alpha \bar{H} -
\bar{\Gamma} \tilde{\Theta}_{11}^{-1} 
\bar{\Gamma}^\top 
 \succ 0, \quad \tilde{\Theta}_{11} \succ 0,
\end{equation}
where 
\begin{equation}
\label{barH}
\bar{H}:=
\begin{pmatrix}
 \Xi \Lambda \Xi^\top & \star &   \star &   \star  \\ 
\bar{H}_{12}^\top &  H_{22} &  \star &   \star \\
\bar{H}_{13}^\top &  H_{23}^\top & H_{33} &   \star \\
\bar{H}_{14}^\top &  H_{24}^\top & H_{34}^\top &    H_{34}
\end{pmatrix},
\end{equation}
and $\bar{\Gamma}:=\begin{pmatrix} \bar{\Gamma}_1 & \bar{\Gamma}_2 & \bar{\Gamma}_3 \end{pmatrix}$ with
\begin{equation}
\nonumber
 \bar{\Gamma}_1 :=
\begin{pmatrix}
\check{A}_s +\check{B}_{u0} D_c C+K_{22}(J_AX_s+J_BD_c C) \\
I_{\tilde{n}}  \\
D_c C  \\ 
0 
\end{pmatrix} ,
\end{equation}
\begin{equation}
\nonumber
 \bar{\Gamma}_2 :=
\begin{pmatrix}
\check{B}_{u0} C_c+K_{22} J_B C_c & \check{B}_{w0}+K_{22} J_{B_1}   \\
0 & 0   \\
 C_c & 0  \\ 
 0 & I_{m_w}
\end{pmatrix} ,
\end{equation}
\begin{equation}
 \bar{\Gamma}_3 :=
\begin{pmatrix}
 0_{p\times \tilde{p}_z} &  \begin{pmatrix} 0 & I_p \end{pmatrix} \begin{pmatrix} LV & L\bar{Y} L^\top \end{pmatrix} \begin{pmatrix} I_{2\tilde{n}-p} \\ 0 \end{pmatrix} \\
 0 & 0   \\
\end{pmatrix}.  \nonumber
\end{equation}

 By applying the Schur complement rule on \eqref{eq:333:665} first with respect to the matrix $\tilde{\Theta}_{11}^{-1}$, and then to the matrix $\frac{1}{\alpha} \Lambda$ (note that $\eqref{eq:333:665}$ implies that $\alpha>0$), we obtain the equivalent expression
\begin{equation}
\label{eq300:600}
\begin{pmatrix}
\tilde{\Theta}_{11}  & \bar{\Gamma}^\top  & 0 \\
\bar{\Gamma} & \bar{H}_{\Lambda}  & \Gamma_\Lambda^\top \\
0 & \Gamma_\Lambda & \alpha \Lambda^{-1}
\end{pmatrix} \succ 0, \quad 
\end{equation}
where $\bar{H}_{\Lambda}:=\text{diag}(\tilde{\Theta}_{22},0_{\tilde{n}+m+m_w})-\alpha \underline{H}$, $\underline{H}$ is the matrix $\bar{H}$ with a zero matrix instead of the matrix $\Xi \Lambda \Xi^\top$, and 
$\Gamma_\Lambda:=\begin{pmatrix} \alpha \Xi^\top & 0 \end{pmatrix}$.

Recall that $\bar{U}$ is an arbitrary regular matrix, let 
\begin{equation}
S_4:=
\begin{pmatrix}
I_{\tilde{n}} & 0 \\
0 & L^{-\top}
\end{pmatrix}
,\ 
S_5:=
\begin{pmatrix}
0 & \bar{U}^\top \\
I & \bar{X}
\end{pmatrix}, \
S_6:=
\begin{pmatrix}
I & \bar{Y} \\
0 & V^\top
\end{pmatrix},
\end{equation}
 and note that 
\begin{equation}
\label{eq:rekonstrukcija}
P P^{-1}=
\begin{pmatrix} 
\bar{X}\bar{Y}+\bar{U}V^\top & \bar{X}V+\bar{U} M_2\\
\bar{U}^\top \bar{Y}+M_1V^\top  & \bar{U}^\top V+M_1M_2
\end{pmatrix}=
\begin{pmatrix}
I & 0 \\
0 & I
\end{pmatrix}.
\end{equation}
Thus, if we apply a congruence transformation on \eqref{eq300:600} with respect to matrix
\begin{equation}
\nonumber
S_7:=
\text{diag}( 
I_{2\tilde{n}+m_w+\tilde{p}_z}, 
S_4
S_5,
I_{\tilde{n}+m+m_w+\tilde{p}}
),
\end{equation}
it follows that
\begin{equation}
\label{Statement111}
(S_4
S_5)^\top
\begin{pmatrix}
M_2 & V^\top L^{\top} \\
LV & L\bar{Y}L^{\top}
\end{pmatrix} 
S_4
S_5
=
\begin{pmatrix}
\bar{Y} & I \\
I & \bar{X}
\end{pmatrix} \succ 0.
\end{equation}
Next, if we use the Schur complement rule on \eqref{Statement111} with respect to the matrix $\bar{X}$, we obtain the equivalent statement $\bar{Y}-\bar{X}^{-1}\succ 0$ and $\bar{X}\succ 0$. From there it follows that $I-\bar{X}\bar{Y}$ is a non-singular matrix.  Since \eqref{eq:rekonstrukcija} implies that $V^\top =\bar{U}^{-1}(I-\bar{X}\bar{Y})$, we can conclude that the equation \eqref{eq300:600} implies that the matrix $V$ has full rank.
Therefore, we can apply a congruence transformation on \eqref{eq300:600} with respect to matrix
\begin{equation}
\nonumber
S_8:=
\text{diag}( 
S_6
, I_{m_w+\tilde{p}_z},
S_4 
S_5,
I_{\tilde{n}+m+m_w+\tilde{p}}
),
\end{equation}
to obtain the equivalent inequality \eqref{eq:600:301}, this proves necessity. In order to obtain \eqref{eq:600:301} and \eqref{eq:3:18}, we used \eqref{eq:rekonstrukcija}, \eqref{Statement111}, \eqref{ipakTreba1}, matrices $\check{A}$, $\check{B}$, $\check{B}_1$  defined in Appendix~\ref{Appendix0} and the following set of equalities 
\begin{equation}
\nonumber
S_6^\top P S_6= 
\begin{pmatrix}
\bar{X} & I\\
I & \bar{Y}
\end{pmatrix},
\end{equation}
\begin{equation}
\nonumber
\tilde{C}
S_6=
\begin{pmatrix}
C_1+ED_c C & C_1\bar{Y}+E\tilde{C}_c
\end{pmatrix}, 
\end{equation}
\begin{equation}
\nonumber
\tilde{C}_c:=D_c C \bar{Y}+C_cV^\top,
\end{equation}
\begin{equation}
\nonumber
\begin{pmatrix}
I & 0 \\
D_c C & C_c
\end{pmatrix}
S_6=
\begin{pmatrix}
I & \bar{Y}\\
D_c C & \tilde{C}_c
\end{pmatrix},
\end{equation}
\begin{equation}
\nonumber
\bar{\Theta}:=
\begin{pmatrix}
B_cC & A_c \\
K( \begin{pmatrix} \check{A}_s  \\ J_AX_s \end{pmatrix}   + \begin{pmatrix} \check{B}_{u0} \\ J_B \end{pmatrix} D_c C) & K\begin{pmatrix} \check{B}_{u0} \\ J_B \end{pmatrix} C_c
\end{pmatrix},
\end{equation}
\begin{equation}
\nonumber
\begin{split}
(S_4 S_5)^\top
\bar{\Theta} S_6:= 
\begin{pmatrix}
\check{A}+\check{B} D_c C & \check{A} \bar{Y}+\check{B} \tilde{C}_c \\
\bar{X}\check{A}+\tilde{B}_c C& \tilde{A}_c  
\end{pmatrix},
\end{split}
\end{equation}
\begin{equation}
\nonumber
\tilde{B}_c:=
\bar{X}\check{B} D_c+\bar{U}B_c, 
\end{equation}
\begin{equation}
\nonumber
\tilde{A}_c :=\bar{X}(\check{A}\bar{Y} +\check{B} \tilde{C}_c)+\bar{U}(B_cC\bar{Y}+A_cV^T),
\end{equation}
\begin{equation}
\nonumber
(S_4 S_5)^\top
\begin{pmatrix}
  0  \\ K \begin{pmatrix} \check{B}_{w0} \\ J_{B_1} \end{pmatrix}
 \end{pmatrix}= \begin{pmatrix}
\check{B}_1  \\ \bar{X} \check{B}_1 
\end{pmatrix},
\end{equation}
\begin{equation}
\nonumber
\hat{\Theta}:=
\begin{pmatrix}
0_{\tilde{n} \times \tilde{n}} & \begin{pmatrix} 0_{\tilde{n} \times (\tilde{n}-p)} & -\alpha \bar{H}_{12}^\top \end{pmatrix} \\
0_{m \times \tilde{n}}  & \begin{pmatrix} 0_{m \times (\tilde{n}-p)} & -\alpha \bar{H}_{13}^\top \end{pmatrix} \\
0_{m_w \times \tilde{n}}  & \begin{pmatrix} 0_{m_w \times (\tilde{n}-p)} & -\alpha \bar{H}_{14}^\top \end{pmatrix} \\
0_{\tilde{p} \times \tilde{n}}  & \begin{pmatrix} 0_{\tilde{p} \times (\tilde{n}-p)} & \alpha \Xi^\top \end{pmatrix} \\
\end{pmatrix},  
\end{equation}
\begin{equation}
\nonumber
\hat{\Theta}S_4 S_5=
\begin{pmatrix}
0_{\tilde{n} \times (\tilde{n}-p)} & - \alpha \bar{H}_{12}^\top  \\
 0_{m \times (\tilde{n}-p)} & -\alpha \bar{H}_{13}^\top   \\
0_{m_w \times (\tilde{n}-p)} & -\alpha  \bar{H}_{14}^\top   \\
 0_{\tilde{p} \times (\tilde{n}-p)} &  \alpha  \Xi^\top 
\end{pmatrix} L^{-\top} 
\begin{pmatrix}  I  & \bar{X} \end{pmatrix}.
 \end{equation}

\noindent \emph{Step 4.}
Suppose that \eqref{eq:600:301} and \eqref{eq:3:18} hold.
Therefore, we can construct matrix $V$ and controller matrices using \eqref{eq:3:18} and reverse the argumentation to prove sufficiency. This establishes the equivalence between statements \emph{i}) and \emph{ii}).
\end{proof}
\begin{theorem}
\label{Theorem2}
Consider the Problem~\ref{Problem1}b, the set of matrices defined in Section~\ref{Appendix0} in the Appendix and the matrices  $\Pi$ and $\Delta $ defined in expression \eqref{eq:3:5}. 
Let $Q=-I_{m_w}$ and $\tilde{p}_z=0$.
Then, the following two statements are equivalent:
\begin{enumerate}
 \item[\emph{i})]
The inequalities
 \begin{subequations}
 \label{eq:3:4_H2}
 \begin{align}
 \text{tr}(Z)  < \mu^2,& \label{E1_H2} \\ 
  \begin{pmatrix}
I & \tilde{B}^\top \\
\tilde{B} &  P-\tilde{A} P \tilde{A}^\top
\end{pmatrix}
  \succ 0,&  \label{E2_H2} \\
   \begin{pmatrix}
Z-\tilde{D}\tilde{D}^\top  &  \tilde{C}P \\
P  \tilde{C}^\top &  P
\end{pmatrix}  \succ 0, & \label{E3_H2} 
 \end{align}
 \end{subequations}
hold for all  $(\bar{A}_s, B_{u0}, B_{w0})\in \Sigma$.
\item[\emph{ii})]
The inequalities 
\begin{equation}
\label{eq:600:301_H2}
 \text{tr}(Z)  < \mu^2, \quad \Pi \succ 0 , \quad \Delta \succ 0,
\end{equation}
hold for some real scalar $\alpha$, symmetric matrices $X$ and $Y$, unstructured matrices $\tilde{A}_c$, $\tilde{B}_c$, $\tilde{C}_c$ and $D_c$.
\end{enumerate}
Furthermore, the controller parameters can be reconstructed from \eqref{eq:600:301_H2} as follows
\begin{equation}
\label{eq:3:18_H2}
\begin{gathered}
\bar{U}=(I-\bar{X}\bar{Y})V^{-\top}, \\
B_c=\bar{U}^{-1}(\tilde{B}_c-\bar{X}\check{B}D_c), \\
C_c=(\tilde{C}_c-D_cC\bar{Y})V^{-\top}, \\
A_c=\bar{U}^{-1}(\tilde{A}_c-\bar{X}(\check{A}\bar{Y}+\check{B}\tilde{C}_c)-\bar{U}B_cC\bar{Y})V^{-\top},
\end{gathered}
\end{equation}
where $V$ is an arbitrary full rank matrix. 
\end{theorem}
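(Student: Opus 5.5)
The proof reuses the machinery of Theorem~\ref{Theorem1}, applied to the two matrix inequalities \eqref{E2_H2} and \eqref{E3_H2}. The key observation is that \eqref{E2_H2} is the dual (transposed) form of the dissipativity inequality. Take $Q=-I_{m_w}$, $S=0$ and $\tilde p_z=0$, i.e., drop the performance-output rows. Applying a Schur complement with respect to $I$ gives $P-\tilde A P\tilde A^\top-\tilde B\tilde B^\top\succ 0$. A second Schur complement turns this into a block inequality of the form \eqref{eq:3:12}, with $P$ in the upper-left block, $(\tilde A\ \tilde B)$ in the off-diagonal block and $\mathrm{diag}(P^{-1},I)$ in the lower-right block. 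Writing $P^{-1}$ in the role played by $P$ in Theorem~\ref{Theorem1}, this is exactly $\Theta\succ 0$ with $\tilde p_z=0$ and $Q=-I$. The uncertainty still enters only through $\tilde A,\tilde B$, that is, through $\Gamma=(I\ \bar A_s\ B_{u0}\ B_{w0})$ multiplying the rows selected by $L$.

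Steps 1--3 of the proof of Theorem~\ref{Theorem1} then carry over verbatim.
\begin{enumerate}
\item Apply the congruence with $S_1=\mathrm{diag}(I,S_2)$, which isolates the last $p$ rows of $L\tilde A$, $L\tilde B$ that contain the unknown triple.
\item Apply the Schur complement with respect to $\tilde\Theta_{11}$. This produces a QMI in $\Gamma$ of the form required by Theorem~\ref{Theorem0}.
\item Invoke Theorem~\ref{Theorem0} with the set $\Sigma$ described by Lemma~\ref{Lemma3}. This is legitimate because Assumptions~\ref{Assumption1} and \ref{Assumption-3} give $S_H\neq\emptyset$ and the negativity \eqref{SigmaIsBounded}. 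The result is the existence of $\alpha\ge 0$, with in fact $\alpha>0$.
\item Apply the congruence by $S_3$, then Schur complements back through $\tilde\Theta_{11}^{-1}$ and $\alpha^{-1}\Lambda$, and finally the linearizing congruences $S_7,S_8$ built from $S_4,S_5,S_6$.
\end{enumerate}
These steps yield $\Pi\succ 0$ in the variables $\bar X,\bar Y,\tilde A_c,\tilde B_c,\tilde C_c,D_c,\alpha$.

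Because we now linearize around $P^{-1}$ rather than $P$, the roles of $(\bar U,V)$ are swapped. Here we fix $V$ as an arbitrary full-rank matrix, which is justified by the multiple-realization argument in Appendix~\ref{AppendixA}. From $PP^{-1}=I$ in \eqref{eq:rekonstrukcija}, $\bar U=(I-\bar X\bar Y)V^{-\top}$ follows. Its invertibility comes, as before, from $\begin{pmatrix}\bar Y & I\\ I&\bar X\end{pmatrix}\succ0$, which is embedded in $\Pi\succ0$. This gives \eqref{eq:3:18_H2}.

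For \eqref{E3_H2}, no unknown data appear, since $\tilde C$ and $\tilde D=D_1$ contain only known matrices. There is also no $\Sigma$-robustification to do. Apply a Schur complement to move to the form with $P^{-1}$, and then the congruence $\mathrm{diag}(I,I,S_6)$. The identities $S_6^\top P S_6=\begin{pmatrix}\bar X&I\\I&\bar Y\end{pmatrix}$ and $\tilde C S_6=(C_1+ED_cC\ \ C_1\bar Y+E\tilde C_c)$ then give precisely $\Delta\succ0$. Here the extra Schur complement with respect to $I$ is used to separate $\tilde D\tilde D^\top$, which produces the first row and column of $\Delta$. The trace condition is untouched.

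Sufficiency follows by reversing every step. Given a solution of \eqref{eq:600:301_H2}, pick $V$, define $\bar U$ and the controller from \eqref{eq:3:18_H2}, and reconstruct $P^{-1}$ (and hence $P\succ0$) as in Theorem~\ref{Theorem1}. All the transformations are equivalences, and Theorem~\ref{Theorem0} is an "if and only if", so \eqref{E2_H2} holds for all of $\Sigma$.

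The main obstacle is the first step: checking that the dualized form of \eqref{E2_H2} really matches the structure of $\Theta$ with $\tilde p_z=0$ and $Q=-I$. In particular, one must check that, after transposition, the unknown parameters still appear linearly only through $\Gamma$ on one side. Otherwise the S-lemma could not be applied without conservatism. I would first verify this by explicitly writing $L\tilde A$ and $L\tilde B$, and confirming that the congruence ordering places $P^{-1}$ in the block that $S_2$ acts on.
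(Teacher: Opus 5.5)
Your proposal is correct and is essentially the paper's proof. \eqref{E2_H2} is turned by Schur complements into the $\Theta\succ 0$ of Theorem~\ref{Theorem1} with $Q=-I_{m_w}$, $S=0$, $\tilde p_z=0$ and the certificate roles swapped (so $V$ is fixed and $\bar U=(I-\bar X\bar Y)V^{-\top}$), Steps 1--3 then give $\Pi\succ 0$, and \eqref{E3_H2}, which contains no unknown data, becomes $\Delta\succ 0$ after a Schur complement with respect to $I$ and a congruence equivalent to the paper's $\mathrm{diag}(I,P^{-1}S_6)$.

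Keep the notation straight, though: in Theorem~\ref{Theorem2}'s variables the identity you need is $S_6^\top P^{-1}S_6=\begin{pmatrix}\bar X & I\\ I & \bar Y\end{pmatrix}$, and $S_2$ acts on the block holding the $H_2$ certificate $P$ itself (Theorem~\ref{Theorem1}'s $P^{-1}$), i.e., the block indexing the rows of $(\tilde A\ \tilde B)$.
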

\begin{proof}
Suppose that  \eqref{eq:3:4_H2} holds.
By applying the Schur complement rule with respect to the matrix $P$ in \eqref{E2_H2} we obtain an equivalent expression
\begin{equation}
\label{h2Derivation1}
\Theta :=
\begin{pmatrix}
P^{-1} & 0 & \tilde{A}^\top\\
0 &  I & \tilde{B}^\top \\
\tilde{A} & \tilde{B} &  P
\end{pmatrix}\succ 0.
\end{equation}
Note that this matrix inequality is equal to \eqref{eq:3:12} with $Q=-I_{m_w}$, $\tilde{p}_z=0$  and 
\begin{equation}
\begin{gathered}
P:=
\begin{pmatrix}
\bar{Y} & V \\
V^\top & M_{2}
\end{pmatrix}, \quad 
P^{-1}:=
\begin{pmatrix}
\bar{X}  & \bar{U} \\
\bar{U}^\top & M_{1}
\end{pmatrix}. \nonumber \\
\end{gathered}
\end{equation}
Thus, we can follow same the same argumentation as in
the proof of Theorem~\ref{Theorem1} to obtain the matrix inequality $\Pi \succ 0$ from \eqref{eq:600:301_H2}.

Next, by applying the Schur complement rule with respect to the matrix $I_{m_w}$ in \eqref{E3_H2} we obtain an equivalent expression
\begin{equation}
\label{h2Derivation17}
   \begin{pmatrix}
   I & \tilde{D}^\top & 0 \\
\tilde{D}& Z  &  \tilde{C}P \\
0 & P  \tilde{C}^\top &  P
\end{pmatrix} \succ 0.
\end{equation}
Finally, by applying a congruence transformation on \eqref{h2Derivation17} with respect to the matrix $S_4:=\text{diag}(I,P^{-1}\begin{pmatrix} I & \bar{Y} \\ 0 & V^\top \end{pmatrix})$ we obtain the equivalent expression $\Delta \succ 0$ from \eqref{eq:600:301_H2}, what completes the proof.
\end{proof}
\begin{remark}
Matrix inequality  $\Pi \succ 0$ from 
\eqref{eq:600:301} and \eqref{eq:600:301_H2} is not  a LMI in the decision variables, because of the term $\alpha \bar{X}$. However, by fixing $\alpha \in \mathbb{R}_{> 0}$ we get an LMI that can be solved multiple times in a line search procedure (as in \cite{c4} and   \cite{R25}).
In the case LMI is not feasible, then the zero on the right hand side can be replaced with $-\varepsilon I$ and a  line search can be performed while minimizing the scalar $\varepsilon$ in the direction of decreasing its value, until $\varepsilon<0$ is eventually found, in which case inequality is feasible.
The matrix inequality $\Pi \succ 0$  differs from  LMI of the analogous model-based controller synthesis. 
Size of that LMI for model-based synthesis is $4\tilde{n} + m_w +\tilde{p}_z$ and $4\tilde{n} + m_w$ for dissipativity and $H_2$ control, respectively (see e.g. \cite[Ch. 4]{R25}). In contrast, size of our LMI is increased by $\tilde{n}+m+m_w+\tilde{p}$.
Although our synthesis method belongs to direct-data driven approach, note that we calculate parameters of one system which is consistent with the recorded data for the synthesis construction purposes. If there is no disturbance $d$ during the data recording process, we reconstruct the actual system. In that case, in the matrix $\bar{H}$, defined in \eqref{barH}, the matrices $\begin{pmatrix}
 \Xi \Lambda \Xi^\top & \bar{H}_{12} & \bar{H}_{13} & \bar{H}_{14}
\end{pmatrix}$ are replaces with zero matrices of an appropriate dimension, since $\Phi_{22}$ is only non-zero matrix in $\Phi$, see \eqref{eq:2:6}. Consequently, the diagonal block $\alpha \Lambda^{-1}$ as well as all corresponding rows and columns are omitted in the definition of $\Pi$ in \eqref{eq:3:5}. Therefore, $\Pi \succ 0$ from 
\eqref{eq:600:301} and \eqref{eq:600:301_H2} is reduced to LMI.
Furthermore,
if some pair  $(\bar{A}_s,B_{u0},B_{w0}) \in \Sigma$ is calculated using \eqref{set_element} from Appendix~\ref{Appendix0}, then $\begin{pmatrix}
 \bar{H}_{12} & \bar{H}_{13} & \bar{H}_{14}
\end{pmatrix}=0$
 also holds in the noisy case, otherwise, this may not be true.
\end{remark}

\section{EXAMPLES}
In this section we test the presented controller synthesis methods using numerical examples.
\subsection{Robust stabilization, $H_\infty$ and $H_2$ control}
\label{Example1}
We consider an unstable discrete-time system from \cite{R22}, where it was used to illustrate the controller synthesis for stabilization.
In order to introduce the performance related channel, we extend the system with a performance input-output pair.
The matrices which define the AR model are 
\begin{equation}
\nonumber
\underline{A} =
\begin{pmatrix}
0 & 1 & 0  & 0 \\
-1 & 1 & 1 & 0
\end{pmatrix},
\quad 
\underline{B}_u =
\begin{pmatrix}
2 & 0 & 0 & 0 \\
1 & 1 & 1 & -1
\end{pmatrix},
\end{equation}
\begin{equation}
\nonumber
\quad 
\underline{B}_w =
\begin{pmatrix}
2 & 0  \\
2 & 0 
\end{pmatrix}, \quad B_{u0}=\begin{pmatrix} 0 & 0 \\ 0 & 0 \end{pmatrix}, \quad B_{w0}=\begin{pmatrix} 0 \\ 0 \end{pmatrix}.
\end{equation}
 These matrices are obtained, following the procedure from \cite[Expression (4b)]{R22}, from a minimal state-space realization
\begin{equation}
\nonumber
\begin{pmatrix}
x(t+1) \\ y(t)
\end{pmatrix}
=
\begin{pmatrix}
A_m & B_m & B_{m1} \\
C_m & D_m & D_{m1}
\end{pmatrix}
\begin{pmatrix}
x(t) \\ u(t) \\ w(t)
\end{pmatrix},
\end{equation}
where the disturbance $w(t)$ is additionally introduced, along with associated matrices $B_{m1}$ and $D_{m1}$, and
\begin{equation}
\nonumber
\begin{pmatrix}
A_m & B_m & B_{m1} \\
C_m & D_m & D_{m1}
\end{pmatrix}=
\begin{pmatrix}
\begin{array}{ccc|cc|c}
0 & 1 & 0 & 1 & 0 & 1 \\
-1 & 0 & 0 & 0 & 1 & 1\\
0 & 0 & 1 & 1 & 0 & 1\\ \hline
1 & 0 & 1 & 0 & 0  & 0\\
0 & 1 & 1 & 0 & 0 & 0
\end{array}
\end{pmatrix}.
\end{equation}
Note that for the considered system $n=3$, $p=2$, $l=2$, thus, it holds that $n<pl$.  

To the AR model we add the disturbance input $d(t) \in \mathbb{R}$ associated with the matrix
\begin{equation}
\nonumber
B_{d0}=
\begin{pmatrix}
0 \\
1
\end{pmatrix},
\end{equation}
for which it holds that  $\text{im}(\mathcal{C}(\hat{A},\hat{B}_d )\subseteq \text{im}(\mathcal{C}(\hat{A},\begin{pmatrix} \hat{B} & \hat{B}_1 \end{pmatrix} )$. See \eqref{eq:3:11} for the definitions of the corresponding matrices. In this case, the state $\chi(t)$ cannot span the whole space, and we can use persistently exciting input $\text{col}(u,w)$ to achieve Assumption~\ref{Assumption-1}.

The considered performance output $z(t)$ and  output channel available for control $y_c(t)$ are defined by the following matrices
\begin{equation}
\nonumber
\hat{C}_1:= \begin{pmatrix} 1 & 0_{1 \times 7} & -1 & 0 \end{pmatrix}
,\quad D_1=0,\quad 
E=0_{1 \times 2}, 
\end{equation}
\begin{equation}
\nonumber
 \quad \hat{C}=\begin{pmatrix} I_2 & 0_{2\times 8}  \end{pmatrix},
\end{equation}
 thus $z(t)=y_1(t-1)-w(t-1)$ and $y_c(t)=y(t-1)$, where $y=\text{col}(y_1,y_2)$.

This system model is used to generate exact and noisy input-output data.
The disturbance bound and the method for generating the noisy input-output data are the same as in \cite{c6}.
Samples of input $\text{col}(u,w)$ and initial condition $\chi(0) \in \mathcal{C}(\hat{A},\begin{pmatrix} \hat{B} & \hat{B}_1 \end{pmatrix}) $ are generated using a Gaussian distribution with zero mean and unit variance. Samples of disturbance $d$ are generated in the same manner, but with standard deviation $ \sigma \in \{0, 0.01, 0.05, 0.1, 0.2\}$ for noiseless case and noisy cases, respectively.
The disturbance bounds have the following form
\begin{equation}
W_dW_d^\top\preceq 1.35 N \sigma^2 I, \nonumber
\end{equation}
where the matrix $W_d$ is defined as in \eqref{NoiseData}, and $N=1000$. 
Assumptions~\ref{Assumption-2}, \ref{Assumption1} and \ref{Assumption-3} regarding the data matrices and the disturbance bound were verified. 
For controller synthesis we consider minimizations of $H_\infty$ performance $\gamma$ (as defined in Remark~\ref{RemarkPrvi1}) and $H_2$ performance $\mu$.
To solve the synthesis LMIs along with minimization of  $\gamma$ and $\mu$ we used Yalmip \cite{c22} environment in MATLAB, with Mosek as a LMI solver.

\begin{figure}[t!]
\centering
\vspace{-0.5cm}
\includegraphics{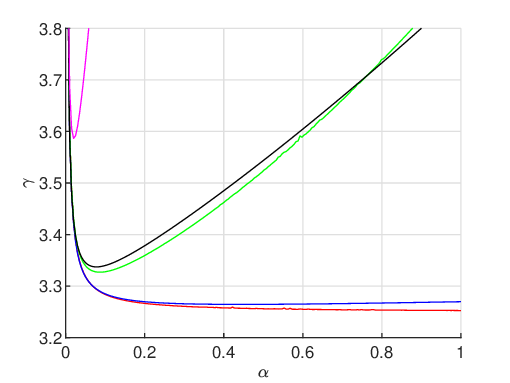}
\caption{$H_\infty$ performance $\gamma$ of the closed-loop system as a function of the
parameter $\alpha$ with $\sigma \in \{0, 0.01, 0.05, 0.1, 0.2\}$  in red, blue, green, black and magenta color, respectively.}
\label{fig-1}
\end{figure} 
 \begin{figure}[t!]
\centering
\includegraphics{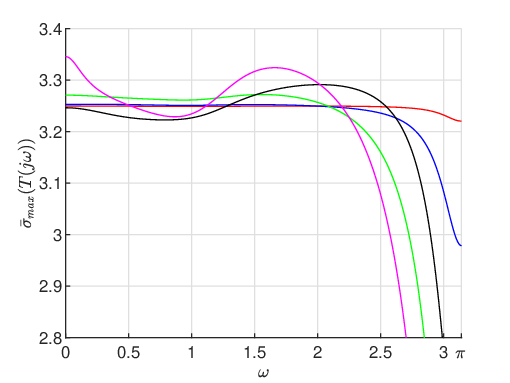}
\caption{
Frequency response of the closed-loop system with $\sigma \in \{0, 0.01,  0.05, 0.1, 0.2\}$  in red, blue, green, black and magenta color, respectively. }
\label{fig-2}
\end{figure}

The line search procedure for the $H_\infty$ controller synthesis is illustrated in the Fig.~\ref{fig-1}, where $H_\infty$ performance $\gamma$ is a function of the parameter $\alpha$.
 The $H_\infty$ performance $\gamma$ results are presented in Fig.~\ref{fig-2}, where by the term frequency response we refer to the largest singular value of the
corresponding transfer function on a particular
frequency $\omega $.  
Numerical results for $H_\infty$ and $H_2$ performance are presented in Table~\ref{Tablica1}, 
 where $\gamma$ and $\mu$ bounds
are $H_\infty$ and $H_2$ performance bounds, respectively, which are associated with the set of systems consistent with the recorded data, while $\gamma$ and  $\mu$ are $H_\infty$ and $H_2$ performances of the actual closed-loop system, respectively.

Note that closed-loop performances using model-based and data-based controllers obtained with $\sigma=0$ are the same. The performance of the closed-loop systems gets worse with an increase of noise in the recorded data,  which is to be expected since set of systems consistent with data increases.

\begin{table}[h!]
\renewcommand{\arraystretch}{1.3}
\caption{Numerical results for $H_\infty$ and $H_2$ control.}
\begin{center}
\label{Tablica1}
\begin{tabular}{ |c|c|c|c|c| } 
\hline
$\sigma$  & $\gamma$ & $\gamma$ bound  & $\mu$ & $\mu$ bound\\
\hline
0  & 3.25 & 3.25 & 3.00 & 3.00\\ 
0.01  & 3.25 & 3.26  & 3.00 & 3.02\\ 
0.05   & 3.27 & 3.33 & 3.00 & 3.11 \\ 
0.1   & 3.29  & 3.34 & 3.00  & 3.16	 \\
0.2  & 3.35  & 3.59 & 3.00 & 3.48 \\
\hline
\end{tabular}
\end{center}
\end{table}

\subsection{Robust $H_\infty$ control of an active suspension}
\label{Example2}
Consider a continuous-time state-space model of a quarter-car model of the active suspension system from \cite{MatlabRef} presented on Fig~\ref{fig40}.
\begin{figure}[h!]
\centering
\includegraphics{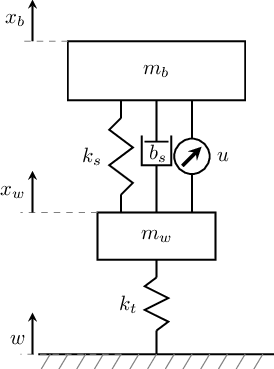}
\caption{
Active suspension system.
}
\label{fig40}
\end{figure}
The associated AR model is defined with the following matrices 
\begin{equation}
\nonumber
\underline{A} =
\begin{pmatrix}
1.9958 &	-0.0308 &	-1.0008 &	0.0310 \\
0.2834  &	1.5110 &	-0.0048 &	-0.8179
\end{pmatrix},
\end{equation}
\begin{equation}
\nonumber
\underline{B}_u =
10^{-3}
\begin{pmatrix}
0.1558 &	0.1455 \\
0.9140	& 0.8545
\end{pmatrix},
\end{equation}
\begin{equation}
\nonumber
\quad 
\underline{B}_w =
\begin{pmatrix}
0.0017 &	0.0034 \\
-0.1441 	& -0.1346
\end{pmatrix}, 
\end{equation}
\begin{equation}
\nonumber
\quad B_{u0}=\begin{pmatrix} 0 \\ 0 \end{pmatrix}, \quad B_{w0}=\begin{pmatrix} 0 \\ 0 \end{pmatrix},
\end{equation}
and signals $y$, $u$ and $w$, with $y=\text{col}(x_b,x_b-x_w)$ where $x_b \text{ [m]}$ and $x_w \text{ [m]}$ are the car body position and the wheel assembly position, respectively,
$u \text{ [kN]}$ is the force of active component of the suspension, and $w \text{ [m]}$ is the  the road disturbance.
These matrices are obtained following the same procedure as in the Section~\ref{Example1}, after applying zero-order hold method on the
 state-space matrices
\begin{equation}
\nonumber
A_m=\begin{pmatrix}
0 & 1 & 0 & 0\\
-\frac{k_s}{m_b} & -\frac{b_s}{m_b} & \frac{k_s}{m_b}  & \frac{b_s}{m_b} \\
0 & 0 & 0 & 1 \\
\frac{k_s}{m_w} & \frac{b_s}{m_w} & \frac{-k_s-k_t}{m_w} &   -\frac{b_s}{m_w}
\end{pmatrix},
\end{equation}
\begin{equation}
\nonumber
B_m=\begin{pmatrix}  0\\
  \frac{10^3}{m_b} \\
   0 \\
   -  \frac{10^3}{m_w} 
\end{pmatrix}, \quad 
B_{m1}= \begin{pmatrix} 0 \\
 0 \\
  0  \\
   k_t 
\end{pmatrix},
\end{equation}
\begin{equation}
\nonumber
C_{m}=\begin{pmatrix}
1 & 0 & 0 & 0 \\
1 & 0 & -1 & 0 
\end{pmatrix}, \quad  D_{m}=\begin{pmatrix} 0 \\ 0 \end{pmatrix}, \quad  D_{m1}=\begin{pmatrix} 0 \\ 0 \end{pmatrix}.
\end{equation}
with sampling time $T_s=0.01s$.
 These matrices are 
associated with the state-space vector $\text{col}(x_b, v_b, x_w, v_w)$, where $v_b \text{ [m/s]}$ and $v_w \text{ [m/s]}$ are  the car body and the wheel assembly velocity, respectively, $k_s=1.6\cdot 10^4 \frac{\text{N}}{\text{m}}$ is the spring stiffness, $m_b=300 \text{kg}$  is  the quarter-car body mass, $b_s=1000 \frac{\text{N}}{\text{m/s}}$ is the damping coefficient,  $m_w=60 \text{kg}$  is  the wheel assembly mass and $k_t=1.9\cdot 10^5 \frac{\text{N}}{\text{m}}$ is the compressibility of the pneumatic tire. 
Note that for the considered system $n=4$, $p=2$, $l=2$, thus, it holds that $n=pl$.

To the AR model we add the disturbance input $d(t) \in \mathbb{R}$ associated with matrix
\begin{equation}
\nonumber
B_{d0}=
\begin{pmatrix}
1 \\
1
\end{pmatrix},
\end{equation}
and 
we define performance output $z(t)$ and  output channel available for control $y_c(t)$
 using the following matrix
\begin{equation}
\nonumber
\hat{C}_1:= 
\begin{pmatrix}
I_2 & 0_{2\times 6}
\end{pmatrix}
,
\quad
D_1= 0_{2\times 1},
\quad 
E= 0_{2\times 1}, 
\end{equation}
\begin{equation}
\nonumber
\hat{C}=\begin{pmatrix} 0 & 1 & 0_{1\times 6}
\end{pmatrix},
\end{equation}
thus $z(t)=\text{col}(x_b(t-1),x_b(t-1)-x_w(t-1))$ and $y_c(t)=x_b(t-1)-x_w(t-1)$.

We consider the same problem setting as in the Section~\ref{Example1}, except
the samples of inputs $u$ and $w$  are randomly generated from the interval  $[-10,10]$ and $[-0.1,0.1]$, respectively, initial conditions are equal to zero, $\sigma \in \{0,0.001\}$ for noiseless and noisy case, respectively,  and $N=2000$.

As in the previous example, derived method recovers the
$H_\infty$ performance of model-based synthesis ($\gamma = 1.27$) for
the noiseless case ($\sigma = 0$). On the other hand, histogram
on Fig. \ref{fig44} illustrates performance robustness using $1000$
generated system trajectories for the noisy case ($\sigma = 0.001$).
Moreover, there were no inconsistency of $H_\infty$ performance of an actual closed-loop system and its bound obtained by synthesis procedure.
The presented results show that the derived method achieve
good robust performance results.

\section{CONCLUSIONS}
In this paper we have proposed non-conservative data-driven dynamic output-feedback controller synthesis methods for generic discrete-time LTI systems with the closed-loop performance criteria formalized in terms of dissipativity and $H_2$ performance. The data used for synthesis is the exact and noisy input-output data, and this complements existing static state-feedback controller
synthesis methods based on exact and noisy input-state data.
The presented numerical examples illustrates the effectiveness of the proposed methods.

 \begin{figure}[t!]
\centering
\includegraphics{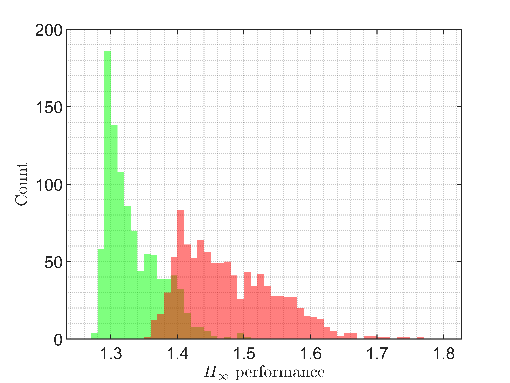}
\caption{
Green and red columns represent counts of $H_\infty$ performances of an actual closed-loop systems and their bounds obtained by synthesis procedure, respectively.}
\label{fig44}
\end{figure}

\section*{APPENDIX}

\subsection{Auxiliary definitions, abbreviations and relations}
\label{Appendix0}

The following definitions are used in the hypothesis of Theorem~\ref{Theorem1} and Theorem~\ref{Theorem2}, as well as in their proofs. 

Consider \eqref{eq:2:3} and define  
$\Lambda \in \mathbb{R}^{\tilde{p} \times \tilde{p}}$ as a positive definite matrix obtained by an arbitrary factorization
\begin{equation}
\label{ipakTreba3}
\begin{pmatrix}
\star \\
\end{pmatrix}
H
\begin{pmatrix}
I & \check{A}_s & \check{B}_{u0}  & \check{B}_{w0}  
\end{pmatrix}^\top=\Xi \Lambda \Xi^\top,
\end{equation}
where
\begin{equation}
\label{set_element}
\begin{pmatrix}
\check{A}_s & \check{B}_{u0}  & \check{B}_{w0}  
\end{pmatrix}^\top
 :=
  -\begin{pmatrix}
  H_{22} &  H_{23} & H_{24} \\
 H_{23}^\top & H_{33} & H_{34} \\
 H_{24}^\top & H_{34}^\top & H_{44} 
\end{pmatrix}^{-1}
\begin{pmatrix}
H_{12}^\top \\
H_{13}^\top  \\
H_{14}^\top
\end{pmatrix} .
\end{equation}
Note that according to the argumentation in \cite[expr. (3.4)]{c21},
 equation \eqref{set_element}  can be used to compute a
triple $(\bar{A}_s,B_{u0},B_{w0}) \in \Sigma$. This has been used in the proof of Theorems~\ref{Theorem1} and \ref{Theorem2}. 

Consider \eqref{set_element} and the matrix partition $X_s=\text{col}(X_{s1}, X_{s2})$ where $X_{s1}\in \mathbb{R}^{p\times \tilde{n}}$ with respect to the Assumption~\ref{Assumption-2}.
We make the following definitions
\begin{equation}
\nonumber
\check{A}:= X_s^\top \begin{pmatrix} \check{A}_s \\ J_A X_s \end{pmatrix}, \quad 
\check{B}:=X_s^\top \begin{pmatrix} \check{B}_{u0} \\ J_B \end{pmatrix},
\end{equation}
\begin{equation}
\nonumber
\check{B}_1:=X_s^\top \begin{pmatrix} \check{B}_{w0} \\ J_{B_1} \end{pmatrix}, \quad 
L :=
\begin{pmatrix}
L_1 \\ L_2
\end{pmatrix}, \quad
K:= \begin{pmatrix} 0 & K_{12} \\ I_p & K_{22} \end{pmatrix},
\end{equation}
\begin{equation}
\label{ipakTreba2}
\begin{pmatrix}
\bar{H}_{12}^\top \\
\bar{H}_{13}^\top \\
\bar{H}_{14}^\top
\end{pmatrix}:=
\begin{pmatrix}
H_{12}^\top & H_{22} & H_{23} & H_{24} \\
H_{13}^\top & H_{23}^\top & H_{33} & H_{34} \\
H_{14}^\top & H_{24}^\top & H_{34}^\top & H_{44} \\
\end{pmatrix}
\begin{pmatrix}
I \\ \check{A}_s^\top \\ \check{B}_{u0}^\top   \\ \check{B}_{w0}^\top 
\end{pmatrix},
\end{equation}
where $L_1^\top$ is an arbitrary full column matrix whose columns form a basis of $\text{ker}(X_{s1})$, $L_2=(X_{s1} X_{s1}^\top)^{-1} X_{s1}$, $K_{12}= L_1 X_{s2}^\top$ and $K_{22}=L_2 X_{s2}^\top $.

Note that matrices $L$ and $K$ defined above are solution of the matrix equation
\begin{equation}
\label{ipakTreba1}
L X_{s}^\top=K.
\end{equation}

\subsection{Synthesis certificates and controller realizations}
\label{AppendixA}
Consider the controller synthesis LMIs \eqref{eq:3:4} and corresponding closed loop system defined by \eqref{closed_loop} and \eqref{closed_loop_matrices}.
We call the matrix $P$ that satisfies these LMIs \emph{the synthesis certificate}. 
Here we present some properties of synthesis certificates $P$ related to its partition in conformity with the partition of the closed-loop state vector $\xi$ in \eqref{closed_loop}. These properties are instrumental for the contruction of controller synthesis procedure.  
\begin{lemma}
\label{lemma3}
Suppose the inequalities \eqref{eq:3:4} hold for the closed-loop system \eqref{closed_loop} with some controller state space realization matrices $(\bar{A}_c, \bar{B}_c, \bar{C}_c, \bar{D}_c)$ and the corresponding synthesis certificate \begin{small}$\bar{P}:=\begin{pmatrix} X & \bar{U} \\ \bar{U}^\top & \bar{M}_1 \end{pmatrix}$\end{small} where $X \in \mathbb{R}^{n \times n}$. Let $\tilde{U} \in \mathbb{R}^{n \times n}$ be an arbitrary given matrix. Then there exists a regular matrix $\tilde{L}\in \mathbb{R}^{n \times n} $ such that a controller space state realization matrices 
$(A_c, B_c, C_c, D_c):=(\tilde{L}\bar{A}_c \tilde{L}^{-1}, \tilde{L} \bar{B}_c, \bar{C}_c \tilde{L}^{-1}, \bar{D}_c)$ and the synthesis certificate \begin{small}$P:=\begin{pmatrix} X & \tilde{U} \\ \tilde{U}^\top & M_1 \end{pmatrix}$\end{small} 
satisfy \eqref{eq:3:4}.  
\end{lemma}
\begin{proof} 
Recall the matrix partition in \eqref{closed_loop_matrices} and note that we can always perturb $\bar{U}$ to obtain a regular matrix since \eqref{eq:3:4} is a strict inequality. 
Next, we apply the congruence transformation on \eqref{E1} and \eqref{E2} with respect to matrices $\text{diag}(I_{\tilde{n}}, \tilde{L}^{-1})$ and $\text{diag}(I_{\tilde{n}},\tilde{L}^{-1}, I_{m_w} )$, respectively.
Then by using the substitutions
$(\bar{A}_c, \bar{B}_c, \bar{C}_c, \bar{D}_c)=(\tilde{L}^{-1}A_c \tilde{L}, \tilde{L}^{-1} B_c, C_c \tilde{L}, D_c)$, $\tilde{U}=\bar{U}\tilde{L}^{-1}$ and $M_1=\tilde{L}^{-\top}\bar{M}_1 \tilde{L}^{-1}$ we obtain a new set of LMIs in the form of \eqref{eq:3:4}
with the controller space state realization matrices $(A_c, B_c, C_c, D_c)$ and
the synthesis certificate \begin{small}$P=\begin{pmatrix} X & \tilde{U} \\ \tilde{U}^\top & M_1 \end{pmatrix}$\end{small}.
Therefore, we can conclude that by appropriate choice of $\tilde{L}$ we can render arbitrary $\tilde{U}$ from some given $\bar{U}$.      
\end{proof}
Note that the Lemma~\ref{lemma3} also holds if we consider \eqref{eq:3:4_H2} instead of  \eqref{eq:3:4}. We can prove this by following the proof of the Lemma~\ref{lemma3}
and substituting \eqref{E1}, \eqref{E2}, $\text{diag}(I_{\tilde{n}}, \tilde{L}^{-1})$, $\text{diag}(I_{\tilde{n}},\tilde{L}^{-1}, I_{m_w} )$, $\tilde{U}=\bar{U}\tilde{L}^{-1}$, $M_1=\tilde{L}^{-\top}\bar{M}_1 \tilde{L}^{-1}$ with  \eqref{E2_H2}, \eqref{E3_H2}, $\text{diag}(I_{m_w+\tilde{n}}, \tilde{L}^\top)$, $\text{diag}(I_{p_z+\tilde{n}},\tilde{L}^{\top})$, $\tilde{U}=\bar{U}\tilde{L}^{\top}$, $M_1=\tilde{L}\bar{M}_1 \tilde{L}^{\top}$, respectively.

\addtolength{\textheight}{-12cm}  
\balance

\bibliographystyle{IEEEtran}
\bibliography{references}

\end{document}